\newtheorem{thm}{Theorem}[section]
\newtheorem{lem}[thm]{Lemma}
\newtheorem{prop}[thm]{Proposition}
\newtheorem{cor}[thm]{Corollary}
\theoremstyle{definition}\newtheorem{df}[thm]{Definition}
\theoremstyle{definition}\newtheorem{rem}[thm]{Remark}
\theoremstyle{definition}
\renewcommand{\phi}{\varphi}
\newcommand{\N}{\mathbb{N}}
\newcommand{\Z}{\mathbb{Z}}
\newcommand{\Q}{\mathbb{Q}}
\newcommand{\R}{\mathbb{R}}
\newcommand{\morp}{contractive completely positive linear map}
\newcommand{\hm}{homomorphism}
\newcommand{\dt}{\delta}
\newcommand{\ep}{\epsilon}
\newcommand{\andeqn}{\,\,\,{\rm and}\,\,\,}
\newcommand{\rforal}{\,\,\,{\rm for\,\,\,all}\,\,\,}
\newcommand{\CA}{$C^*$-algebra}
\newcommand{\SCA}{$C^*$-subalgebra}
\newcommand{\beq}{\begin{eqnarray}}
\newcommand{\eneq}{\end{eqnarray}}
\newcommand{\tforal}{\,\,\,\text{for\,\,\,all}\,\,\,}
\title{ Tensor Products of Classifiable $C^*$-algebras}
\author{ Huaxin Lin and Wei Sun\thanks{The corresponding author}
 }
\date{}
\begin{document}

\maketitle
%\footnote{The corresponding author}
\begin{abstract}

%\textcolor{red}{
Let ${\cal A}_1$ be the class of all unital separable simple \CA s $A$ such that
$A\otimes U$ has tracial rank {no more than} one for all UHF-algebra {$U$} of infinite type. It has been shown
that {all } amenable ${\cal Z}$-stable \CA s in ${\cal A}_1$ which satisfy the Universal Coefficient Theorem can be classified
up to isomorphism by the Elliott \textcolor{blue}{invariant}. {In this note}, {we} show that $A\in {\cal A}_1$ if and only if
$A\otimes B$ has tracial rank {no more than} one for {some} unital simple infinite dimensional AF-algebra $B.$
In fact, we show that $A\in {\cal A}_1$ if and only if $A\otimes B\in {\cal A}_1$ for some
unital simple AH-algebra $B.$ We actually prove a more general result. Other results regarding the tensor products of \CA s in ${\cal A}_1$ are also obtained.
%}

\end{abstract}

\section{Introduction}

The Elliott program of classification of amenable \CA s is to classify separable amenable \CA s up to {}{isomorphism}
by {their} $K$-theoretic data known as the Elliott {invariant}.  It is {a} very successful program.  Two important classes of unital separable simple \CA s, the class of amenable separable purely infinite simple \CA s satisfying the Universal Coefficient
Theorem (UCT) and {the class of} unital simple AH-algebras with no dimension growth{}{,} are classified by their Elliott {invariant} (see
\cite{KP}, \cite{EG-0}, \cite{EGL} and \cite{G1} among {many articles in the literature}). There {has been other significant progress} in the Elliott program.  Related to this note, it has been shown that unital separable amenable
simple \CA s with tracial rank at most one {}{and} satisfy the UCT are classifiable by the Elliott {invariant}. {}{In fact}, they are isomorphic to unital simple AH-algebras with no dimension growth. More recently,
with a remarkable   method developed by W. Winter (\cite{W-local}), {the notion of rational tracial rank at most one was introduced (a unital separable simple \CA\, $A$ is said to have rational tracial rank at most one if $A\otimes U$ has tracial rank at most one
for every  UHF-algebra $U$ of infinite type), and} it was shown in \cite{Lin-AsymUnitEquiv}
that unital separable amenable simple ${\mathcal Z}$-stable \CA s which satisfy the UCT and {}{have rational} tracial rank at most one are also classifiable by the Elliott {invariant} (see also \cite{W-local}, \cite{Lin-Appendix} and \cite{LN-lifting}).  This class is significantly larger than the class of all
unital simple AH-algebras with no dimension growth.
%}
  Denote by ${\mathcal A}_1$ the class of all unital separable simple \CA s which {}{have } rational tracial rank at most one.
A special unital separable simple \CA\, in ${\mathcal A}_1$ which does not have finite tracial rank is the Jiang-Su algebra ${\mathcal Z}.$ The range of the Elliott {}{invariant} for {}{\CA s of rational} tracial rank at most one has been characterized and computed
(see \cite{LN-range}).
This class of \CA s {includes} \CA s whose ordered $K_0$-groups may not have the Riesz interpolation property.
The verification that a  particular unital simple \CA\, is in the class ${\mathcal A}_1$ was slightly eased when
it was proved in \cite{LN-range} that, $A\in {\mathcal A}_1$ if and only if $A\otimes U$ has tracial rank at most one for some UHF-algebra $U$ of infinite type ({instead} {}{of} for all UHF-algebras of infinite type).
Suppose {}{that} $A$ is a unital separable simple \CA\, such that $A\otimes B$ has tracial rank at most one for some unital {}{simple infinite dimensional} AF-algebra $B.$ Does it follow that $A\in {\mathcal A}_1?$  We will answer this question
affirmatively in this short note. In fact, we will show that if $A\otimes B$ has tracial rank at most one for some unital infinite dimensional separable
simple \CA\, $B$ with tracial rank at most one then $A\in {\mathcal A}_1.$
This may provide a better way to determine which \CA s are in ${\mathcal A}_1.$  
%\sout{\color{red}{ ce 
%$K_i(A\otimes Q)$ ($i=0,1$) is torsion free and divisible, while $A \otimes U$ could have torsion in general.} }

{Denote by ${\mathcal N}$ the class of all unital separable amenable $C\sp*$-algebras which satisfy the Universal Coefficient Theorem.}
For the {}{purpose of classification}, we also consider  ${\mathcal A}_1 {\cap {\mathcal N}}$, the class of all unital separable simple amenable \CA s which {}{have rational tracial rank} at most one {and satisfy} the UCT.
We will show that if $A$ and $B$ are {both} in ${\mathcal A}_1 {\cap {\mathcal N}}$, then $A\otimes B$ {}{is also in} ${\mathcal A}_1 {\cap {\mathcal N}}$.
{}{Assume} that $A\in {\mathcal A}_1 {\cap {\mathcal N}}$ and $B$ is \textcolor{blue}{a simple \CA} \ with tracial rank at most one \textcolor{blue}{and satisfies the UCT}.
{}{From the fact above}, $A\otimes B$ is also in ${\mathcal A}_1 {\cap {\mathcal N}}$. One {}{might} ask
whether $A\otimes B$ has tracial rank at most one. We will {also} give an affirmative answer to this question.  

Most of results are in a more general setting which may provide an opportunity for the future applications.
In fact, with a much more recent classification result in \cite{GLN}, we expect some of the results presented 
in this note can be used to ease some technical constrains.  In fact, for example, \textcolor{blue}{for a unital simple separable \CA\, $A$,
it is much \textcolor{blue}{more} delightful to work \textcolor{blue}{with} $A\otimes Q$ than $A\otimes U,$} since 
$K_i(A\otimes Q)$ ($i=0,1$) is torsion free and divisible, while \textcolor{blue}{$K_i(A\otimes U)$} could have torsion in general. 
Some applications of results in \textcolor{blue}{this} short note can be found in \cite{GLN}.

\section{Preliminaries}
\begin{df}
{Let} $A$ be a \CA.
Let $\mathcal{F}$ and $\mathcal{G}$ be two subsets {of $A$}. Let $\ep > 0$. We say that $\mathcal{F} \subset_{\ep} \mathcal{G}$
if for each $x \in \mathcal{F}$, there exists $y \in \mathcal{G}$, such that $\| x - y \| < \ep$.

By $A_+$, we mean the positive cone of all positive elements in $A$.

If $a, b\in A_+$, we write
$a\lesssim b$ if there \textcolor{blue}{is a sequence $\{ x_n \}$ in $A$} such that  $\lim_{n\to\infty} \|x_n^*bx_n- a\|=0.$
We say two positive elements $x$ and $y$ are Cuntz equivalent and write \textcolor{blue}{it as} $x\sim y,$  if $x\lesssim y$ and 
$y\lesssim x.$ 
%such that $x x^* = a$ and $x^* x \in \overline{bAb}$}.11	

Let $A$ be a unital stably finite simple \CA. 
Denote by $T(A)$ the tracial state space of $A.$ Define
$d_\tau(a)=\lim_{n\to\infty} \tau(a^{1/n})$ for all $a\in A_+$ and $\tau\in T(A).$ 
$A$ is said to have strict comparison property for positive elements if 
for any pair $a, b\in A_+\setminus\{0\},$ $d_\tau(a)<d_{\tau}(b)$ for all $\tau\in T(A)$ implies that 
$a\lesssim b.$

{Let ${\mathcal F} \subset A$ be a finite subset and let $p \in A$ be a projection. We use $p {\mathcal F} p$ to
denote $\{ p x p \colon x \in {\mathcal F} \}$.

%Let $B$ be a subalgebra of $A$ and let $\ep > 0$. We write ${\mathcal F} \subset_{\ep} B$ if $\dist(x, B) < \ep$
%for all $x \in {\mathcal F}$.

}

\end{df}

\begin{df}\label{DI0}
Let ${\cal B}$ be a family of unital \CA s. 
%Denote by {${\mathcal I}_1$} the class of all finite direct sums of \CA s of the form
%$M_n(C([0,1]))$ (for different integers $n\in \N$).
% In other words,
%$$ {\mathcal I}_1 = \left\{ \bigoplus_{i=1}^k M_{n(i)}(C[0, 1]) \colon k \in \N, n_i \in \N \right\} . $$
%Recall that a unital simple \CA\, $A$ has tracial rank at most one, if the following holds:
We say a unital simple separable \CA\, $A$ is tracially approximated by \SCA s in ${\cal B}$ and write \textcolor{blue}{it as} 
$A\in TA{\cal C},$ if the following holds:
For any $\ep>0,$ any finite subset ${\mathcal F}\subset A$ and any $a\in A_+\setminus\{0\},$ there {}{exist
a projection $p\in A$ and a \SCA\, $B\subset A$} with {$B\in {\cal B}$} and $1_B=p$ such that
% QQQQ
\begin{align}\label{DI0-1}
& \|px-xp\| < \ep\tforal x\in {\mathcal F},\\
& p {\mathcal F} p  \subset_{\ep} B\andeqn\\
& 1-p  \lesssim a.
\end{align}

Let ${\cal B}={\cal I}_1$ be the family of \CA s of the form $C([0,1], F),$ where $F$ is a unital finite dimensional \CA .
Then we write $TR(A)\le 1$ if $A\in TA{\cal I}_1.$ 

Note  that, in the original definition {3.1} of  \cite{Lin-TR-LMS}, {${\mathcal I}_1$} 
%in the above {}{definition} 
is replaced by the class of
all finite direct sums of \CA s of the form $M_n(C({}{X_n})),$ where {}{each $X_n$ is a finite CW {complex} with dimension
{one}}.   But those definitions are equivalent. Please see Theorem 6.13 and Theorem 7.1 of \cite{Lin-TR-LMS} for more details on such equivalence.  {In the definition above, if we replace
${\cal B}$  by {${\mathcal I}_0$}, the class of finite dimensional \CA s, then we say that $A$ has tracial rank zero (see Theorem 7.1 of \cite{Lin-TR-LMS}}).  If $A$ has tracial rank at most one, we {denote it by} $TR(A)\le 1$. If $A$ has tracial rank zero, {}{we denote it by} $TR(A)=0$. {For more details, please see  \cite{Lin-TR-LMS}.}

\end{df}

\noindent \textbf{Notations:}
Let $A$ be a unital \CA.  {For each $n \in \N$, there is an embedding of $M_n(A)$ into $M_{n + 1}(A)$ defined by $a \mapsto \begin{pmatrix} a & 0 \\ 0 & 0  \end{pmatrix}$}. Denote by $M_{\infty}(A)$ {}{the {algebraic inductive limit of $M_1(A) \rightarrow M_2(A) \rightarrow \cdots$, whose connecting maps are just the embeddings above}}.
%Denote by $T(A)$ the tracial state space of $A$. 
Suppose that \textcolor{blue}{$T(A)\not= 0.$} 
 {For any $p \in M_{\infty}(A)$ and for any $\tau \in T(A)$, we may assume that $p \in M_{n}(A)$ for certain $n.$  By identifying $M_n(A)$ with $A \otimes M_n(\mathbb{C})$,
 we define $\tau(p)$ to be} $(\tau\otimes \mbox{Tr})(p),$ where $\mbox{Tr}$ is the standard {matrix} trace {}{(not normalized)}
on $M_n(\mathbb{C}).$ Note that the value $(\tau\otimes \mbox{Tr})(p)$ is independent
of the choice of $n.$

Denote by ${\mathcal N}$ the class of all unital separable amenable $C\sp*$-algebras which satisfy the Universal Coefficient Theorem.

Denote by $Q$ the UHF-algebra with $(K_0(Q), K_0(Q)_+, [1_Q])=(\Q, \Q_+, 1).$

{}{Use} ${\mathcal A}_0$ to denote the class of all unital separable simple  $C\sp*$-algebras $A$ for which
$TR(A \otimes M_{\mathfrak{p}}) = 0$ for all supernatural numbers $\mathfrak{p}$ of infinite type

Use ${\mathcal A}_1$ to denote the class of all unital separable simple  $C\sp*$-algebras $A$ for which
$TR(A \otimes M_{\mathfrak{p}}) {\leq} 1$ for all supernatural numbers $\mathfrak{p}$ of infinite type.

By the above defined notations, ${\mathcal A}_0 \cap {\mathcal N}$ is the class of all \CA s {}{which are amenable and are in ${\mathcal A}_0$}, and ${\mathcal A}_1 \cap {\mathcal N}$ is the class of all \CA s {}{which are amenable and are in ${\mathcal A}_1$}.

%{We say that a \CA \, $A$ has property (SP)  if for each $a \in A_+ \setminus \{ 0 \}$, there exists a non-zero projection $p$ in $\mbox{Her}(a)$.}

\vspace{2mm}
%\noindent \textcolor{red}{ [[ The original definition 2.3 removed, as it is unnecessary now. ]] }

%\begin{df}
%Let $A$ be a unital \CA\, with  \textcolor{blue}{$T(A)\not= 0.$}  We say that $A$ has the property of  strict comparison for 
%positive elements. 
%%projections, if
%%for any pair of projections $p, q \in M_{\infty}(A)$, $\tau(p)<\tau(q)$ for all $\tau\in T(A)$ implies that $p \lesssim q$ in $M_{\infty}(A)$.
%\end{df}

\begin{df}
Let $\ep>0.$ 
Define
$${ f_{\ep}(t) = \left\{ \begin{array}{lll} 1 & & t \geq 2 \ep \\ (1 / \ep) t - 1 & & \ep < t < 2 \ep \\ 0 & &{0 \leq} \ t \leq {\ep} \end{array} \right.} {.} $$
Then $f$ is a continuous function on $[0, \infty).$
\end{df}

\section{Tensor with AF-algebras}

\begin{df}\label{DC}
%For the rest of this paper,
Throughout this section and the next, 
let ${\cal C}$ be a class of unital separable amenable \CA s which satisfy the following properties:
(1)
% \textcolor{blue}{$\C \in {\cal C}$ and ${\cal C}$ is closed under tensoring with every finite dimensional \CA s;}  
 Every finite dimensional \CA s is in ${\cal C};$ (2) \textcolor{blue}{If} $A\in {\cal C},$ then $A\otimes F\in {\cal C},$ for every 
finite dimensional \CA\, $F;$ (3) \textcolor{blue}{Every} \CA\, in ${\cal C}$ is weakly semiprojective; 
(4) \textcolor{blue}{Every} unital hereditary \SCA\, of \CA s in ${\cal C}$ is in ${\cal C};$ (5) Suppose that 
$A\in {\cal C}$ and $I\subset A$ is a closed ideal. Then, for any finite subset ${\cal F}\subset A/I$ and any $\ep>0,$ 
there exists a \SCA\, $B\subset A/I$ such that $B\in {\cal C}$ and
${\rm dist}(x, B)<\ep$ for all $x\in {\cal F}.$

It is easy to verify that the class ${\cal I}_1$ satisfies (1) \textcolor{blue}{to} (5). 

Let $F_1,$ $F_2$ be two finite dimensional \CA s, \textcolor{blue}{and let} $\phi_1,\, \phi_2: F_1\to F_2$ be two \hm s.
Define the mapping torus 
$$
A=A(F_1, F_2, \phi_1, \phi_2)=\{(f,a)\in C([0,1], F_2)\oplus F_1: f(0)=\phi_1(a)\andeqn f(1)=\phi_2(a)\}.
$$
Let ${\cal C}'$ be the class  consisting of all such mapping tori and all finite dimensional \CA s. 
It is obvious that ${\cal C}'$ satisfy the property (1) and (2) above. 
It is proved in \cite{ELP} that all \CA s in ${\cal C}'$ are semiprojective (property (3)).  It is proved in \cite{GLN} that
the class also satisfies property \textcolor{blue}{(4) and (5)}.
\end{df}

We begin with the following:

\begin{prop}\label{L1}
Let $A$ be a unital separable simple {infinite dimensional} \CA\, in $TA{\cal C}.$ Then, for any simple AF-algebra
$B$ ($B$ could be finite dimensional), $A\otimes B\in TA{\cal C}.$ 	
\end{prop}

  \begin{proof}  The case that $B$ is finite dimensional follows from the property (1) of \CA s in ${\cal C}.$ 
 
  {Now we  assume $B$ is infinite dimensional.} 
   It is easy to see that $A\otimes B$ is a unital simple \CA. Note that $B$ is approximately divisible (see \cite{BKR} for the definition).
  By Theorem 1.4 of \cite{BKR}
   $A\otimes B$ has  the strict comparison property for positive elements. 
  Let ${\cal F}\subset A\otimes B$ be a finite subset, $\ep>0$ and $c\in (A\otimes B)_+\setminus\{0\}.$ 
 %\textcolor{blue}{As $A$ is infinite dimensional and simple, due to the approximate divisibility of $A \otimes B$,} t
 Since $A$  is a unital  infinite dimensional simple \CA,
 it is non-elementary.  It is easy to find, for any integer $n\ge 1,$ $n$ non-zero mutually orthogonal 
 and Cuntz equivalent positive elements. By the strict comparison, 
 one obtains  a non-zero element $a_0\in A_+$ such that $a_0\otimes {1_B} \lesssim c.$ 
 %Since $B$ has tracial rank zero, by Lemma 4.6 of \cite{HLX} that there is  a nonzero $d_1\in A_+$  and 
 % nonzero $d_2\in B_+$ such that $d_1\otimes d_2\lesssim c.$
 
  To prove that $A\otimes B$ is in $TA{\cal C},$ we may assume, without  loss of generality, that 
  ${\cal F}=\{a\otimes b: a\in {\cal F}_1\andeqn b\in {\cal F}_2\},$ where ${\cal F}_1$ and ${\cal F}_2$ are 
  finite subsets in $A$ and $B,$ respectively. Since $B$ is AF, \textcolor{blue}{we may} further assume 
  that ${\cal F}_2\subset F,$ where $F$ is a  unital finite dimensional \SCA\, of $B.$ 
  Moreover, to simplify notation further, without loss of generality, 
  we may also assume that $\|a\|,\, \|b\|\le 1$  {for all $a \in {\cal F}_1$ and $b \in {\cal F}_2$}.
  
  Since $A\in TA{\cal C},$ there exists a projection $p_1\in A$  and a \SCA\, $C_0\in {\cal C}$ of $A$ 
  with $1_{C_0}=p_1$ such that 
  \beq\label{Tred-1}
  \|ap_1-p_1a\| & < & \ep/2\rforal a\in {\cal F}_1,\\
  {\rm dist}(p_1ap_1, C_0) & < & \ep/2\rforal a\in {\cal F}_1\andeqn\\
  1-p_1 &\lesssim & a_0.
  \eneq
  Define $C_1=C_0\otimes F$ and $p=p_1\otimes 1_B.$  Then $C_1\in {\cal C}$ and $1_{C_1}=p.$
  \textcolor{blue}{It follows that}
  \beq\label{Tred-2}
  \|xp-px\| & < & \ep\rforal x\in {\cal F}\\
  {\rm dist}(pxp, {C_1}) & < & \ep\rforal x\in {\cal F}\andeqn\\
  1-p &\lesssim &  a_0\otimes 1_B\lesssim c.
  \eneq

  % Since $TR(B)=0,$ there exists a projection $p_2\in  B$ and a finite dimensional \SCA\, $F$ of $B$ 
  %with $1_{F}=p_2$ such that
  %\beq\label{Tred-2}
  %\|bp_2-p_2-b\|<\ep/2\rforal b\in {\cal F}_2,\\
 % {\rm dist}(p_2bp_2, F)<\ep/2\rforal b\in {\cal F}_2\andeqn\\
  %1-p_2\lesssim d_2.
  %\eneq
  %Put $p=p_1\otimes p_2$ and $C_1=C_0\otimes F.$ 
  %Then $1_{C_1}=p$ and $C_1\in {\cal C}_0.$ 
  %Then
  %\beq\label{Tred-3}
  %\|xp-px\|<\ep/2\rforal x\in {\cal F},\\
  %{\rm dist}(pxp, C_1)<\ep/\rforal x\in {\cal F}\andeqn\\
  %1-p\lesssim c.
  %\eneq
%  It follows that $GTR(A\otimes B)\le 1.$
  \end{proof}

\begin{rem}
\textcolor{blue}{If $A$ is finite dimensional, Proposition \ref{L1} still holds.}
\end{rem}

\begin{thm}\label{TN1}
Let $A$ be a unital separable simple \CA. 
Suppose that 
$A\otimes U\in TA{\cal C}$ for some infinite dimensional  UHF-algebra $U.$ 
Then $A\otimes B\in TA{\cal C}$ for any unital infinite dimensional simple AF-algebra. 
\end{thm}

\begin{proof}
Suppose that $A\otimes \textcolor{blue}{U} \in {TA{\cal C}.}$  
Let $B$ be a unital infinite dimensional simple AF-algebra.

Fix $\ep>0,$ a finite subset ${\cal F}\subset A\otimes B$ and $a\in (A\otimes B)_+\setminus\{0\}.$

\textcolor{blue}{As $B$ is approximately divisible, so is $A \otimes B$.} It follows from \textcolor{blue}{Theorem 1.4 (a) of} \cite{BKR} that $A\otimes B$ is either purely infinite or 
%
%Then $A\otimes B$ is a unital simple \CA\, which 
has the strict comparison property for positive elements.
In either \textcolor{blue}{case}, there is a non-zero element $a_0\in \textcolor{blue}{1_A} \otimes B$ such that 
$a_0\lesssim a$ in $A\otimes B.$ 
\textcolor{blue}{As $A \otimes B$ is simple,} there is an integer $N_0\ge 1$ such that 
\beq\label{27-1}
1_{\textcolor{blue}{A \otimes B}} \lesssim N_0[a_0].
\eneq

%Put 
%$$
%\sigma=\inf\{d_\tau(a)/4: \tau\in T(A\otimes B)\}.
%$$
%Since  $A \otimes B$ is simple, $\sigma>0.$ 
We write $B=\lim_{n\to\infty}(B_n, \psi_n),$ where each $B_n$ is a finite dimensional \CA\, and 
$\psi_n: B_n\to B_{n+1}$ is a unital embedding.   If $n>m,$ put 
$\psi_{m,n}=\textcolor{blue}{\psi_{n-1}} \circ\cdots \psi_m: B_m\to B_n.$ We will also use 
$\psi_{n, \infty}: B_n\to B$ for the \textcolor{blue}{unital} embedding induced by the inductive limit. 
Write 
$$
B_n=M_{R(n,1)}\oplus M_{R(n,2)}\oplus \cdots M_{R(n, k(n))}.
$$
\textcolor{blue}{According to Proposition 2.2 and Lemma 2.3 (b) of \cite{Rordam2},} to simplify notation, without loss of generality, by replacing 
$a_0$ \textcolor{blue}{with} a smaller (in Cuntz relation) element, we may assume that $a_0\in B_n$ for some 
large $n.$ Moreover, we may assume that $a_0=a_{1,n}\oplus a_{2,n}\oplus\cdots a_{k(n),n},$ where 
$a_{i,n}\in B_{R(n,i)},$ $i=1,2,...,k(n).$ 
Since $B$ is simple, we may assume that $R(n,j)>4N_0$ for all $j$ and all $n.$ 
It follows from (\ref{27-1}) that we may assume 
that the range projection of $a_{j,n}$ has rank at least two. 
\textcolor{blue}{Then} we may write $a_{j,n}\ge a_{j,n}^{(0)}+a_{j,n}^{(1)},$ where 
$a_{j,n}^{(i)}$ has exactly rank one range projection, and $a_{j,n}^{(0)}$ and $a_{j,n}^{(1)}$ are mutually orthogonal.
Thus 
$$
a_0\ge a_0^{(i)}=a_{1,n}^{(i)}\oplus a_{2,n}^{(i)}\oplus\cdots \oplus a_{k(n),n}^{(i)},\,\,\,i=0,1.
$$
By choosing possibly smaller $a_0$ (in the Cuntz relation), we may assume 
that $a_{j,n}^{(i)}$ is a rank one projection for each $j$ and $n,$ $i=0,1.$

 By changing notation, 
without loss of generality, we may further assume that
${\cal F}\subset A\otimes B_1$ and $a_0, a_0^{(0)}, a_0^{(1)}\in {B_1}.$ 
Define $\pi_{j}: B_1 \to M_{R(1, j)}$ to be the \textcolor{blue}{canonical} projection to the $j$-th summand, $j=1,2,..., \textcolor{blue}{k(1)},$ $n=2,3,....$ 
Put ${\cal F}_j= \textcolor{blue}{\pi_{j}} ({\cal F}),$ $j=1,2,...,k(1).$

\textcolor{blue}{For each}  $A\otimes M_{\textcolor{blue}{R(1, i)}}\otimes U \in  TA{\cal C},$ there exists a projection $p_i\in A\otimes M_{R(1,i)}\otimes U$
 and 
a \SCA\, $\textcolor{blue}{D_{0,i}} \in {\cal C}$ with $\textcolor{blue}{ 1_{D_{0,i}} }= p_i$ such that
\beq\label{NT-1}
\|[p_i,\, x]\| & < & \ep/8\rforal x\in {\cal F}_i,\\
{\rm dist}(p_ixp_i, \textcolor{blue}{ D_{0,i} } ) & < & \ep/8\rforal x\in {\cal F}_i\andeqn\\\label{NT-1+}
1_{A\otimes M_{R(1,i)} \textcolor{blue}{ \otimes U } }-p_i & \lesssim & a_{i,1}^{(0)}.
\eneq	
%\tau(1-p_j)<\sigma\tforal \tau\in T(A\otimes M_{R(1,j)}\otimes U).
Let ${\cal G}_i\subset B_{0,i}$ be a finite subset such that, for every $x\in {\cal F}_i,$ there exists $x'\in {\cal G}_i$ such 
that $\|p_ixp_i-x'\|<\ep/\textcolor{blue}{16}.$  We may also assume that $1_{  \textcolor{blue}{D_{0,i}}  }\in {\cal G}_i.$ 

Write $U=\overline{\cup_{n=1}^{\infty} M_{r(n)}},$ where 
$\lim_{n\to\infty} r(n)=\infty$ and $M_{r(n)}\subset M_{r(n+1)}$ unitally. 
\textcolor{blue}{Since each \textcolor{blue}{$D_{0,i}$} is weakly semiprojective, we can choose $n_0$ large enough,} \textcolor{blue}{such that  for each $i = 1, 2, \cdots, k(1)$}, there exists a unital \hm\, $\phi_i: \textcolor{blue}{ D_{0,i} } \to A\otimes M_{R(1,i)}\otimes M_{r(n_0)}$, \textcolor{blue}{satisfying}
\beq\label{NT-2}
\|\phi_i(x')-x'\|<\ep/8\rforal x'\in {\cal G}_i.
\eneq
Without loss of generality (by replacing $\phi_i$ \textcolor{blue}{with} ${\rm Ad}\, u_i\circ \phi_i$ for some unitary $u_i$ \textcolor{blue}{in $A\otimes M_{R(1,i)}\otimes M_{r(n_0)}$} if necessary), we may assume that $\phi_i(1_{B_{0,i}})=p_i.$
It follows from the property (5)  of \CA s in ${\cal C}$ that there exists a unital \CA\, \textcolor{blue}{$D_i\subset \phi_i(D_{0,i})$} such that
\textcolor{blue}{$D_i\in {\cal C}$}, \textcolor{blue}{$1_{D_i}=\phi_i(D_{0,i})$} \textcolor{blue}{and}
\beq\label{NT-3}
{\rm dist}(\phi_i(x'), \textcolor{blue}{D_i} )<\ep/\textcolor{blue}{16} \rforal x'\in {\cal G}_i.
\eneq
Note that \textcolor{blue}{$1_{D_i}=\phi_i(1_{D_{0,i}})=p_i.$} 
Thus 
\beq\label{NT-3}
{\rm dist}(p_ixp_i, \textcolor{blue}{D_i} )<\ep/4\rforal x\in {\cal F}_i.
\eneq

Denote by $\imath_{0,i}: M_{R(1,i)}\to M_{R(1,i)r(n_0)}$ \textcolor{blue}{the map defined as} $\imath_{0,i}(x)=x\otimes 1_{M_{r(n_0)}},$
$i=1,2,...,k(1),$ and define $\imath_0: A\otimes B_1\to A\otimes B_1\otimes M_{r(n_0)}$ 
by $\imath_0(x)=x\otimes 1_{M_{r(n_0)}}$ for all $x\in A\otimes B_1.$ 

Since $B$ is a unital simple AF-algbera, we may assume that $\psi_{1, n_1}: B_1\to B_{n_1}$ has 
multiplicities at least $N\ge 1$ \textcolor{blue}{for each simple  summand of $B_1$}, such that
\beq\label{NT-4}
{2r(n_0) \left( \sum_{j=1}^{\textcolor{blue}{ k(1)} }R(1,j) \right)^2\over{N}}<1.
\eneq
%Let $\pi_{n_1,j}: B_{n_1}\to M_{R(n_1,j)}$ be the projection to the $i$-th summand, $j=1,2,...,k(n_1).$ 
Put $\Psi_{i,j}=\pi_{n_1,j}\circ \left(  \psi_{1, n_1}|_{M_{R(1,i)}} \right): M_{R(1,i)}\to M_{R(n_1,j)},$ \textcolor{blue}{where $\pi_{n_1,j}$ is the canonical projection to the $j$-th summand of $B_{n_1}$.} The assumption  on the multiplicity \textcolor{blue}{implies} 
that 
\textcolor{blue}{ $\Psi_{i,j}(1_{ M_{R(1,i)} })=1_{ M_{m(i,j)} } \in M_{R(n_1,j)}$ } with $m(i,j)\ge N,$ $i=1,2,...,k(1)$ and 
$j=1,2,...,k(n_1).$ It follows that 
$R(1,i)|m(i,j),$ $i=1,2,...,k(1)$ and $j=1,2,...,k(n_1).$ 
Note that \textcolor{blue}{$1_{ M_{R(n_1,j)} }=\bigoplus_{i=1}^{k(1)} \Psi_{i,j}(1_{ M_{R(1,i)} }),$ $j=1,2,...,k(n_1).$} 
Write 
\beq\label{NT-5}
m(i,j)=l(i,j)r(n_0)R(1,i)+s_{i,j},
\eneq
where $l(i,j)\ge 1$ and $r(n_0)R(1,i)>s_{i,j}\ge 0$ are integers. It follows that  
\beq\label{NT-5+}
\sum_{i=1}^{k(1)}{s_{i,j}\over{m(i,j)}} \textcolor{blue}{ < \sum_{i=1}^{k(1)}{r(n_0)R(1,i)\over N } } < \sum_{i=1}^{k(1)}{r(n_0)R(1,i)\over{2r(n_0)(\sum_{i=1}^{k(1)}R(1,i))^2}}<{1\over{2\sum_{i=1}^{k(1)}R(1,i))}}.
\eneq
Since $R(1,i)|m(i,j),$ we may write $s_{i,j}=s_{i,j}^{  \textcolor{blue}{(r)} }R(1,i),$ $i=1,2,...,k(1)$ and $j=1,2,...,k(n_1).$ 
Define $\rho_{i,j}: M_{R(1,i)}\to M_{s_{i,j}}$ by $x\to x\otimes \textcolor{blue}{  1_{M_{s_{i,j}^{(r)} }} }.$
 Note also that
$\sum_{i=1}^{k(1)}m(i,j)=R(n_1,j),$ $j=1,2,...,k(n_1).$

It follows from (\ref{NT-5+}) that 
\beq\label{NT-5++}
\bigoplus_{i=1}^{k(1)}\rho_{i,j}(1_{M_{R(1,i)}}) \, \lesssim \, \psi_{1, n_1}(a_{j,1}^{(1)}).
\eneq	
Let $\imath_{1,i,j}: M_{r(n_0)R(1,i)}\to M_{l(i,j)r(n_0)R(1,i)}$ be the embedding defined 
by $a\mapsto a\otimes 1_{M_{l(i,j)}}.$  Let  $\imath_{2,i,j}: M_{l(i,j)r(n_0)R(1,i)}\to \Psi_{i,j}(M_{R(1,i)})$ be defined 
by the embedding which sends rank one projection to rank one projection.
Put $\imath_{3,i,j}=\imath_{2,i,j}\circ \imath_{1,i,j}.$  Define $\imath_{4,i,j}: A\otimes M_{R(1,i)}\otimes M_{r(n_0)}\to A\otimes M_{R(n_1,j)}$
\textcolor{blue}{by} $\imath_{4,i,j}(a\otimes b)=a\otimes \imath_{3,i,j}(b)$ for all 
$a\in A$ and $b\in M_{r(n_0)R(1,i)}.$ 
Note that 
$$
\left( \oplus_{j=1}^{k(n_1)} \imath_{3,i,j}\circ \imath_{0,i} \right) \bigoplus \left( \oplus_{j=1}^{k(n_1)}\rho_{i,j} \right) \, = \, \bigoplus_{j=1}^{k(n_1)}\Psi_{i,j} \ = \ \psi_{1,n_1}|_{M_{R(1,i)}}
$$ 
and 
$$ \bigoplus_{i=1}^{k(1)} \left( \left( \oplus_{j=1}^{k(n_1)}\imath_{3,i,j}\circ \imath_{0,i}\right) \oplus \  \left( \oplus_{j=1}^{k(n_1)}\rho_{i,j}\right) \right) \ = \ \psi_{1, n_1}.$$
%It follows that $\oplus_{j=1}^{k(n_1)}(\oplus_{i=1}^{k(1)} \imath_{4, i,j})\circ \imath_0=\psi_{1, n_1}.$

Define $\imath: A\otimes B_{n_1}\to A\otimes B$ to  be the map given by $a\otimes b\mapsto a\otimes \psi_{1, \infty}(b).$ 

Put $C_1=\bigoplus_{i=1}^{k(1)} \imath \left( \oplus_{j=1}^{k(n_1)}\imath_{4,i,j}( \textcolor{blue}{D_i} ) \right).$ Then 
$C_1\in {\cal C}$ and $p=1_{C_1}$ has the form \textcolor{blue}{$\imath \left( \oplus_{i=1}^{k(1)}p_i\otimes p_i' \right),$} where 
$p_i\in A\otimes M_{R(1,i)}\otimes M_{r(n_0)}$  and $p_i'=\oplus_{j=1}^{k(n_1)}1_{M_{l(i,j)}}.$ A fact we use here 
is 
\beq\label{NT-6}
\|xp-px\|=\|x(p_i\otimes p_i')-(p_i\otimes p_i')x\|=\|xp_i-p_ix\|<\ep / 4
\eneq
for all $x\in \textcolor{blue}{\cal F},$ since $x=a\otimes b,$ where $a\in A$ and $b\in B_1.$ 
We also have 
\beq\label{NT-7}
{\rm dist}(pxp, C_1)<\ep/4\rforal x\in {\cal F}.
\eneq
By (\ref{NT-1+}) and  (\ref{NT-5++}),
\beq\label{NT-8}
1-p\le \sum_{i=1}^{k(1)}(1-p_i)+\sum_{j=1}^{k(n_1)}\rho_{i,j}(1_{M_{R(1,i)}})\lesssim a_0^{(0)}+a_0^{(1)}\lesssim a,
%tau(1-p)<\sigma \rforal\tau\in T(A\otimes B).
\eneq
where we identify $a_0^{(i)}$ with $\psi_{1,\infty}(a_0^{(i)}).$
Therefore $1-p\lesssim a. $
This implies that $A\otimes B\in {TA{\cal C}}.$ 

\end{proof}

\begin{prop}\label{Lher}
Let $A$ be a unital separable simple \CA\, in $TA{\cal C}$ and let $p\in A$ be a non-zero projection.
Then $pAp\in TA{\cal C}.$
\end{prop}

\begin{proof}
 Let $1/4>\ep>0.$ {Let} ${\cal F}\subset pAp$ be a finite subset and let $a\in {(pAp)}_+\setminus \{0\}.$
Without loss of generality, we may assume that $p\in {\cal F}$ and $\|x\|\le 1$ for all $x\in {\cal F}.$  
Since $A$ is in $TA{\cal C},$ there is a projection $e\in A$ and a \SCA\, $C_0\in {\cal C}$ of $A$ with $1_{C_0}=e$ such that
{\color{blue}{
\beq\label{Lher-1}
\|ex-xe\| & < &\ep/64\rforal x\in {\cal F},\\
{\rm dist}(exe, C_0) & < & \ep/64\rforal x\in {\cal F}\andeqn\\\label{Lher-1+}
1-e & \lesssim & a.
\eneq
}}
We have $\|ep-pe\|<\ep/64.$ One computes that there is a projection $e_0\le p$  and $e_0'\in C_0$ such that
$\|e_0-pep\|<\ep/ {32},$ $\|e_0'-epe\|<\ep/{32}$ and 
$\|e_0-e_0'\|<\ep/{8}.$  \textcolor{blue}{Then there is} a unitary $u\in A$ such that $\|u-1\|<\ep/4$ such that
$u^*e_0'u=e_0.$ Define $C_1=u^*(e_0'C_0e_0')u.$ By the property (4) {as in the definition of $\mathcal{C},$} $e_0'C_0e_0'\in {\cal C}.$
Therefore $C_1\in {\cal C}.$ 
Note that $px=xp=x$ for all $x\in {\cal F}.$  We then estimate that ({with} $\|x\|\le 1$ for $x\in {\cal F}$ as assumed), \textcolor{blue}{$\rforal x\in {\cal F}$,} 
\beq\label{Lher-3}
\|e_0x-xe_0\| &\le & \|e_0x-pepx\|+\|pepx-xe_0\|\\
&<& \ep/16+\|pepx-pxep\|+\|pxep-xe_0\|\\
&<& \ep/16+\ep/64+\ep/16<\ep.
\eneq
By (\ref{Lher-1+}),
\beq\label{Lher-4}
p-pep\lesssim a.
\eneq
Since $\|(p-pep)-(p-e_0)\|<\ep/{32},$ by {Proposition 2.2 and Lemma 2.3 (b) of \cite{Rordam2}},
\beq\label{Lher-5}
p-e_0=f_{\ep/{16}}(p-e_0)\lesssim p-pep\lesssim a.
\eneq
We also estimate that  
\beq\label{Lher-5}
{\rm dist}(e_0xe_0, C_1)<\ep\rforal x\in {\cal F}.
\eneq
It follows that $pAp\in TA{\cal C}.$
\end{proof}

\begin{thm}\label{T1}
 Let $A$ be a unital simple separable \CA.
 Then $A\otimes C\in TA{\cal C}$ for all unital simple AF-algebra \textcolor{blue}{$C$} if and only if 
 $A\otimes C\in TA{\cal C}$ for \textcolor{blue}{some} infinite simple AF-algebra $C.$ 
 %$A \otimes Q$ has tracial rank at most one.

\end{thm}

\begin{proof}
\textcolor{blue}{Following Theorem \ref{TN1}}, it suffices to show the following:
Suppose that $A\otimes C\in TA{\cal C}$ for some unital simple AF-algebra {$C$}. Then $A\otimes Q\in TA{\cal C}.$ 

Since every finite dimensional \CA\, is in ${\cal C},$ it is easy to see that 
we only need to consider the case that $A$ is infinite dimensional. 

%\textcolor{red}{[This paragraph is added to, 1: deal with the case when $C$ is finitely dimensional. 2: Make use of Lemma \ref{L1}, else it is not used throughout the paper]} \textcolor{blue}{If $C$ is finitely dimensional, then $C = M_l$ for certain $l$. Note that there is a unital embedding of $M_l$ into $Q$, without loss of generality, we can write $A 
%\otimes Q = \lim_{k \rightarrow \infty} (A \otimes M_l) \otimes M_{k!}$. As $C$ is in $TA{\cal C}$, according to Lemma \ref{L1}, each $C \otimes M_{k!}$ is also in $TA{\cal C}$.  Note that the properties (3) and (4) of $\mathcal{C}$ ensures that $TA{\cal C}$ is closed under direct limits, it then follows that $A \otimes Q$ is also in $TA{\cal C}$. }
%\textcolor{blue}{
%Now, we can assume $C$ is infinitely dimensional.
%}

 Let  $B=A\otimes Q.$
Let $1/4>\ep>0$. {Let} $a\in B_+\setminus \{0\}$ and let  ${\cal F}\subset B$ be a finite subset.
% {also} given.
To simplify the notation, without loss of generality, we may assume 
that $\|x\|\le 1$ for all $x\in {\cal F}$ and $\|a\|=1.$ 
%{According to the definition of tracial rank no more than one}, we {may} assume that $\| a \| = 1$ and $\ep < 1/4$.

%There are $a_1 \in B_+\setminus \{0\}$ such that  $a_1 \lesssim a$.
%$\langle a_1 \rangle =\langle a_2\rangle$ and $a_1+a_2\lesssim a.$

We will write $A\otimes Q$ as $\lim_{k\to \infty} (A\otimes M_{k!}, j_k),$ where
$j_k \colon A\otimes M_{k!}\to A\otimes M_{(k+1)!}$ {is given} by
$j_k(a)=a\otimes 1_{M_{(k+1)}}$ for all $a\in A\otimes M_{k!},$ { $k=1,2,....$ }   Without loss of generality, we may assume
that ${\cal F}\subset A\otimes M_{k!}$ for some $k\ge 1.$

Without loss of generality {again}, we may assume that there exists a positive element $a' \in A \otimes M_{k!}$ such that $\| a - a' \| < \ep$. 
%Let
%$${ f_{\ep}(t) = \left\{ \begin{array}{lll} 1 & & t \geq 2 \ep \\ (1 / \ep) t - 1 & & \ep < t < 2 \ep \\ 0 & & t \leq {\ep} \end{array} \right.} {.} $$
By Proposition 2.2 and Lemma 2.3 (b) of \cite{Rordam2}, $f_{\ep}(a') \lesssim a$.  {Put $a_0=f_{\ep}(a')$. }
As $\| a \| = 1$ {and} $\ep < 1/4$, it is clear that $a_0 \in (A\otimes M_{k!})_+ \setminus \{0\}$.

For $C$ {in the statement}, we write it as $\lim_{m\to\infty} (C_m, \imath_m),$  { where {each} $C_m$
is a finite dimensional \CA\, and} $\imath_m$  is a unital
embedding {of $C_m$ into $C_{m + 1}$}.  Since $C$ is an infinite dimensional unital simple AF-algebra, {for  $k$ above,} we {can assume that for $m$ large enough, each
$C_m$ satisfies}
\beq\label{T1-1}
C_m=M_{n_1}\oplus M_{n_2}\oplus\cdots \oplus M_{n_{s(m)}},
\eneq
where $n_j\ge k!,$ $j=1,2,...,s(m)$.
Fix one such $m.$
{Then} one obtains a projection $q\in C_m$ {such} that $M_{k!}$ is a unital
\SCA\, of $qC_mq$ (with unit $p$). Put $e={1_A} \otimes q$ {in $A \otimes C$} and let {$\phi_1': M_{k!}\to qC_mq$ be} a unital embedding.
Define $\phi_1: A\otimes M_{k!}\to A\otimes qC_mq$ by
$\phi_1({x} \otimes {y}) = {x} \otimes \phi_1'({y})$ for all ${x} \in A$ and ${y} \in M_{k!}.$

%It follows from  {Theorem 3.6 of \cite{Lin-SimpleNuCAOfTR1}} that $e(A\otimes C)e$ has tracial rank
%no more that one.
By \textcolor{blue}{Proposition} \ref{Lher}, $e(A\otimes C)e\in TA{\cal C}.$ 
Therefore there exists a projection $p\in e(A\otimes C)e$ and a \SCA\, $I_0 \in {\cal C}$
of $e(A\otimes C)e$ with $1_{I_0}=p,$ {satisfying}
\beq\label{T1-2}
\|px-xp\| &<&\ep/16\tforal x\in \phi_1({\cal F}),\\\label{T1-2+}
{\rm dist}(pxp, I_0)&<&\ep/16\tforal x\in \phi_1({\cal F})\andeqn\\\label{T1-2++}
1-p &\lesssim &\phi_1(a_0).
\eneq

%Let $K_0 = \max_{x \in \mathcal{F}} \{\| x \|\}$ and let $K = \max(K_0, {16})$. 
Choose a finite set $\mathcal{G}_0$ in $I_0$
such that $p {\varphi_1(\mathcal{F})} p \subset_{\ep / {16}} \mathcal{G}_0$. 
%Let $\mathcal{G}_1$
%be a finite {generating} set of $I_0$ and let $\mathcal{G} = \mathcal{G}_0 \cup \mathcal{G}_1 \cup \{ 1_{I_0} \}$.
Since $I_0$ is weakly semi-projective, 
%according to Lemma 15.2.1 of \cite{LoringBook}, 
for $n$ large enough, there exists a
homomorphism $h \colon I_0 \rightarrow A \otimes (q C_n q)$ such that $\| h(y) - y \| < \ep / 32$ for all $y \in {\mathcal{G}_0}$.
Without loss of generality, replacing $h$ by ${\rm Ad}\, u\circ h$ for some unitary $u$ if necessary, we may assume 
that $h(p)=p.$ 
%In particular, $h(p)$ is a projection in $A \otimes (q C_n q)$ such that $\| p - h( p ) \| < \ep / 8$. As $\ep < 1/4$, we know that
%$p$ and $h(p)$ are untarily equivalent.
Using property (5), we obtains a unital \SCA\, $I_{00}\subset h(I_0)$ with $1_{I_{00}}=1_{h(I_0)}=p$ such that
$I_{00}\in {\cal C},$ 
\beq\label{T1-15-1}
{\rm dist}( g, I_{00})<\ep/16\rforal g\in {\cal G}_0.
\eneq
Therefore 
%It can be checked that
\beq\label{T1-2.1}
%\| h(p) x - x  h(p) \|&<& {3 \ep/ 16} \tforal x\in \phi_1({\cal F}),\\\label{T1-2.1+}
{\rm dist}(pxp, I_{00}) &<& \ep/4 \tforal x\in \phi_1({\cal F})
%\\\label{T1-2.1++}
%1-h(p) &\lesssim& \phi_1(a_0).
\eneq

%To simplify notation, we may assume
%that $I_0\subset  A\otimes qC_nq$, where $p \in  A\otimes qC_nq$ for some large $n$ satisfying $n\ge m.$
Write $qC_nq$ as $M_{m_1}\oplus M_{m_2}\oplus \cdots M_{m_r}.$ Note
 that $k! | m_j$ for $j=1,2,...,r{,}$ as $\varphi_1'$ is unital. Put { $N=\sum_{j=1}^r m_j.$}
{Then} there is a unital  embedding $\phi_2': qC_nq\to M_{N!}.$
Consider \textcolor{blue}{the canonical embedding} $j_k: M_{k!}\to M_{N!}$ and $\phi_2'\circ \phi_1': M_{k!}\to M_{N!}.$
Since they {are both} unital, there is a unitary $u\in M_{N!}$ such that
$$
{\rm Ad} u \circ \phi_2'\circ \phi_1'=j_k.
$$

Define $\phi_2: A\otimes qC_nq\to A\otimes M_{N!}$ by
$$
\phi_2({x} \otimes {y}) = {x} \otimes ( {\rm Ad}\, u\circ \phi_2'({y}) )
$$
for all ${x} \in A$ and ${y} \in qC_nq.$

Then
\beq\label{T1-3}
(\phi_2\circ \phi_1) (c)=c\tforal c\in A\otimes M_{k!}.
\eneq

Put $p_1=\phi_2(p)\in A\otimes M_{N!}\subset A\otimes Q$
and $D=\phi_2(I_{00})\subset A\otimes M_{N!}\subset A\otimes Q$ with
$1_D=p_1.$ Note also $D\in {\cal C}.$ Moreover,
by (\ref{T1-2}), (\ref{T1-2+}) and (\ref{T1-2.1}), we have
\beq\label{T1-4}
&&\hspace{-0.4in}\|p_1x-xp_1\|=\|\phi_2(p\phi_1(x)-\phi_1(x)p)\|=\|p\phi_1(x)-\phi_1(x)p\|<\ep/2\tforal x\in {\cal F};\\
&&{\rm dist}(p_1xp_1, D)\le {\rm dist}(p\phi_1(x)p, I_{00})<\ep/2\tforal x\in {\cal F}.
\eneq
{Then},  by (\ref{T1-2++}),
\beq\label{T1-5}
1-p_1 = \phi_2(1-p)\lesssim \phi_2(\phi_1(a)) = a.
\eneq

This implies that $A\otimes Q\in TA{\cal C}.$
%, {which shows that $A \in \mathcal{A}_1$}.

\end{proof}

The following corollary is a special case of Theorem \ref{T1}.

\begin{cor}\label{CN1}
Let $A$ be a unital simple separable \CA, and let $C$ be a unital infinite dimensional simple {AF-algebra}.  Suppose that $A\otimes C$ has tracial rank at most one.
Then  $A\in {\mathcal A}_1.$
\end{cor}

\section{Criterions for \CA s to be in ${\cal A}_1$}

%\textcolor{blue}{Throughout this section, we use ${\cal C}$ to denote the class of \CA s as in Definition \ref{DC} of previous section. }

\begin{lem}\label{ML1}
Let $A$ be a unital separable simple \CA{}{. Let} $C$ be a unital simple AH-algebra with no dimension growth and with ${\rm Tor}(K_0(C))=\{0\}$. Suppose that
{$A \otimes C$ is in $TA{\cal C}$. Then for any simple unital infinite dimensional AF algebra $F$, $A \otimes F$ is also in $TA{\cal C}$.}
%$A\otimes C$ has tracial rank no more than one. Then {$A\in {\mathcal A}_1.$}
\end{lem}

\begin{proof}

{According to Theorem \ref{T1}, we just need to show that $A \otimes F$ is in $TA{\cal C}$ for some simple unital AF-algebra \textcolor{blue}{$F$}.}

%Note that ${\rm Tor}(K_0(C)) = \{ 0 \}$.
{ By Theorem 3.7 of \cite{Martin-Pasnicu}, we know   that $K_0(C)$ is weakly unperforated. By Theorem 2.7 of \cite{Goodearl-RieszDecomp}, $K_0(C)$ has the Riesz interpolation property.
As $\mbox{Tor}(K_0(C)) = 0$,
%by Lemma 8.1 of \cite{G-K0OfMultAlgOfRR0CA},
we have that $K_0(C)$ is an unperforated Riesz group.}
%\cite{Goodearl}
It follows from the Effros-Handelman-Shen theorem (Theorem 2.2 of \cite{EHS}) that there exists a unital separable simple AF-algebra $B$ with
\begin{align}\label{ML1-1}
(K_0(B), K_0(B)_+,[1_B])=(K_0(C), K_0(C)_+, [1_C]).
\end{align}

{We will show that $A\otimes B$ is in $TA{\cal C}.$}
For that, let $1/4>\ep>0$. {Let} ${\mathcal F}\subset A\otimes B$ be a finite subset and
let $a\in (A\otimes B)_+\setminus \{0\}$.  Without loss of generality, we may assume that $1/2>\ep,$ ${\mathcal F}$ is a subset of the unit ball and $\|a\|=1.$

{}{For any $f \in {\mathcal F}$, we} may assume that there are $a_{f,1},a_{f,2},...{,} a_{f,n(f)}\in A$
and $b_{f,1},b_{f,2},...,b_{f, n(f)}\in B$ such that
\begin{align}\label{ML1-1+1}
\|f-\sum_{{i}=1}^{n(f)} a_{f, {i}}\otimes b_{f,i} \|<\ep/32 .
\end{align}
We may also assume that there exist $x_1,x_2,...,x_{n(a)}\in A$ and
$y_1,y_2,...,y_{{n(a)}}\in B$ such that
\begin{align}\label{ML1-1+2}
\|f_{1/4}(a)-\sum_{i=1}^{{n(a)}} x_{{i}}\otimes y_i\|<\ep/32 .
\end{align}

Let
\begin{align}\label{ML1-1+3}
K_1&= { n(a) +\max\{ n(f) \colon f\in {\mathcal F}\} },\\
K_2 &= { \max\{\|x_i\|+\|y_i\| \colon 1\le i\le n(a)\} } \andeqn\\
K_3&= \max\{\|a_{f,{i}}\|+\|b_{f,i}\| \colon { 1\le i\le n(f) } \andeqn
f\in {\mathcal F}\}.
\end{align}

Put $a_1=f_{1/2}(a).$ 
%with $f_{\ep}$ as defined  in the proof of Theorem \ref{T1}.

As $B$ is an AF-algebra and $C$ has stable rank one (see \cite{DNNP}),  it is  known 
%{(see Theorem 3.4.6 of \cite{LinBook} or Lemma 1.3.1 of \cite{RordamStormBook})} that
% (see Theorem 3.4.6 of \cite{LinBook})} 
that there exists a unital \hm\, ${\phi'} \colon B\to C$
such that $({\phi'})_*$ gives the identification \eqref{ML1-1}.
Define ${\phi} \colon A\otimes B\to A\otimes C$ {as}
${\phi} = {\rm id}_A\otimes {\phi'}.$
Now since $A\otimes C$ is in $TA{\cal C}$,  there exists
{a \SCA\, $D$ of $A\otimes C$ such that $D \in {\mathcal C}$ and {(using $p$ to denote $1_D$)}}
\begin{align}\label{ML1-2}
\|px-xp\| &< \ep/32\tforal x\in {\phi}({\mathcal F}),\\
{\rm dist}(pxp, D) &< \ep/32\tforal x\in {\phi}({\mathcal F})\andeqn\\\label{ML1-2++}
1-p &\lesssim  {\phi}(a_1).
\end{align}
\textcolor{blue}{Thus} there exists $w\in A\otimes C$ such that 
$w^*w=1-p$ and $ww^*\phi(f_{1/4}(a))=ww^*.$
Let ${\cal G}_0\subset D$ be a finite subset such that, for each $x\in {\cal F},$ there exists 
$y\in {\cal G}_0$ such that $\|x-y\|<\ep/32.$

By the UCT,  we obtain $\kappa\in KL(C, B)$ such that
$\kappa|_{K_1(C)}=0$ and $\kappa|_{K_0(C)}=({\phi'})_{*0}^{-1}.$
Choose a unital AH-algebra $C_0$ with no  dimension growth whose Elliott invariant is
$$
(K_0(C_0), (K_0(C_0))_+,[1_{C_0}], K_1(C_0),  T(C_0), r_{C_0})=(K_0(C), (K_0(C)_+, [1_{C}], \{0\}, T(C), r_C).
$$
With the identification above, it is known (by Theorem 6.10 of \cite{Lin-HomFromAH}, for example) that 
there exists a unital \hm\, $H: C\to C_0$ such that 
$H_{*0}={\rm id}_{K_0(C)},$ $H_{*1}=0$ and $H$ induces the identity map on $T(C).$ 
By the UCT, we obtain $\kappa\in KL(C_0, B)$ such that $\kappa|_{K_0(C_0)}={\rm id}_{K_0(C_0)}$ (with the above identification). Note $K_1(C_0)=\{0\}=K_1(B).$ 
It follows from  Theorem 9.12 of \cite{Lin-TR1} (see also Theorem 4.7 of \cite{Lin-KT} ) 
%Theorem 6.10 of {\cite{Lin-HomFromAH}} 
that there exists a  sequence of  unital 
\morp s $\Psi_n': C_0\to B$ such that
\beq\label{ML1-3}
[\{\Psi_n'\}]=\kappa\andeqn \lim_{n\to\infty}\|\Psi_n'(x)\Psi_n'(y)-\Psi_n'(xy)\|=0
\eneq
for all $x,\, y\in \textcolor{blue}{C_0}.$ 
Define $\Psi_n=\Psi_n'\circ H: C\to B,$ $n=1,2,....$ 
%embedding ${\psi'} \colon C\to B$ such that
%\begin{align}\label{ML1-3}
%[{\psi'}]=\kappa.
%\end{align}
By {Theorem 3.5.3 of \cite{BO-Book}}, there exists a sequence of unital \morp s $\Phi_n: A\otimes C\to A\otimes B$ such that
\beq\label{ML1-3+1}
\Phi_n(x\otimes y)=x\otimes \Psi_n(y)
\eneq
for all $x\in A$ and $y\in C.$ 
%Choose a finite subset ${\cal G}_2\subset .$
Since $D$ is weakly semiprojective, we may assume that, there exists $n_0\ge 1$ such that, for all 
$n\ge n_0,$ there exists a unital \hm\, $h_n: D\to A\otimes B$ such that
\beq\label{ML1-3+2}
\|h_n(g)-\Phi_n(g)\|<\ep/32\rforal g\in {\cal G}_0.
\eneq

Let
\beq\label{ML1-4}
{\mathcal G}_1 & = & \{  \textcolor{blue}{1_B}  \}\cup \{ y_i \colon 1\le i\le {n}(a)\}\cup\{ b_{f,i} \colon { 1\le i\le n(f)} \andeqn f\in {\mathcal F} \}
\andeqn\\
{\cal G}_2 & = & \{a_{f,i}: 1\le  i\le n(f) \  \mbox{\textcolor{blue}{and}}  \  f\in {\cal F}\}\cup \{x_i: 1\le i \le n(a)\}.
\eneq
\textcolor{blue}{As $B$ is an AF algebra}, without loss if generality, we may assume that there exists a finite dimensional \SCA\, \textcolor{blue}{$E \subset B$}
such that ${\cal G}_1\subset \textcolor{blue}{E}.$

Put
\begin{align}\label{ML1-5}
\dt={\ep\over{32K_1K_2K_3}}.
\end{align}
%There exists another integer $n_1\ge n_0$ such that
%\beq\label{ML1-5+1}
%\|x\otimes \Psi_n(y)-\Phi_n(x\otimes y)\|<\dt/2\rforal x\in {\cal G}_2 \andeqn y\in {\cal G}_1
%\eneq
%and t
\textcolor{blue}{As $E$ is weakly semiprojective, so is $\varphi'(E)$ (note that $E$ is simple)}. There \textcolor{blue}{then} exists a unital \hm\, $h_n':  \textcolor{blue}{\varphi'(E)} \to B$ such that, when $n$ is large enough, 
\beq\label{ML1-5+2}
\|h_n'(g)-\Psi_n(g)\|<\dt/2\rforal g\in \textcolor{blue}{ \varphi' ( {\cal G}_1) }.
\eneq

We may also assume, without loss of generality, that 
$(\textcolor{blue}{h_n' \circ \phi'})_{*0}=(({\rm id}_{B})|_{ \textcolor{blue}{E} })_{*0}.$ 
\textcolor{blue}{Then we can choose sufficiently large $n_1,$  such that for each $n > n_1$,} there exists a unitary $\textcolor{blue}{v_n} \in C$
%Note that
%\begin{align}\label{ML1-5.1}
%[{\psi'} \circ {\phi'}]=[{\rm id}_B]\,\,\,{\rm in}\,\,\, KL(B,B).
%\end{align}
%{According to Lemma 4.2 of \cite{Elliott-AF-classification}},  there exists a unitary $u\in B$
%(see Theorem 3.4.6 of \cite{LinBook})
%{[[This is original Elliott theorem---refer to his paper]]}
satisfying
\begin{align}\label{ML1-6}
\|{ ( \rm Ad}\, \textcolor{blue}{v_n} \circ h_n' \circ {\phi'} ) (y)-y\|<\dt/2\tforal y\in {\mathcal G}_1.
\end{align}

\textcolor{blue}{For $n\ge n_1$,} define $\Phi_n' \colon A\otimes C\to A\otimes B$ by
$\Phi_n'={\rm Ad}\, (\textcolor{blue}{1_A} \otimes v) \circ \Phi_n.$
Put $p_1 = h_n'(p)$ and $D_1=h_n(D).$  
%\textcolor{blue}{As $D \in {\cal C}$, according to property (3) of ${\cal C}$, we can find a subalgebra $D_2$ of $D_1$, such that $1_{D_2} = 1_{D_1}$ and
%\begin{align}
 %\dist(\psi(p x p), D_2) <  \epsilon / 16 \ \   \mbox{for all} \ x \in \varphi({\cal F}) . 
%\end{align} 
 %}
By choosing  even larger $n_1,$ we may assume, without loss of generality
\beq\label{ML1-6+}
\|\Phi_n'(w)^*\Phi_n'(w)-(1-p_1)\|<\ep/16\andeqn\\\label{ML1-6++}
 \|\Phi_n'(w)\Phi_n'(w)^*(\Phi_n'\circ \phi(f_{1/4}(a)))-\Phi_n'(w)\Phi_n'(w)^*\|<\ep/16.
\eneq

Then, one estimates, by \eqref{ML1-6} and \eqref{ML1-1+1}, that
\begin{align}\label{ML1-7}
\|\textcolor{blue}{\Phi_n'} \circ {\phi}(f)-f\|<\ep/32+\ep/32+K_1K_3\dt<3\ep/32\tforal f\in {\mathcal F}.
\end{align}
Similarly,
\begin{align}\label{ML1-8}
\|\textcolor{blue}{\Phi_n'} \circ {\phi}(f_{1/4}(a))-f_{1/4}(a)\|<\ep/32+\ep/32+K_1K_2\dt<3\ep/32 {}{.}
\end{align}
By applying \eqref{ML1-2}, \eqref{ML1-3+2} and \eqref{ML1-7}, we {}{then have} that
\begin{align}\label{ML1-9}
\|p_1x-xp_1\| &\le \|p_1x- \textcolor{blue}{\Phi_n'} (p)  \textcolor{blue}{\Phi_n'} \circ {\phi}(x)\|\\
& \hspace{.7cm} +\| \textcolor{blue}{\Phi_n'} (p) \textcolor{blue}{\Phi_n'} \circ {\phi}(x) - \textcolor{blue}{\Phi_n'} \circ {\phi}(x) \textcolor{blue}{\Phi_n'} (p)\|\\
& \hspace{.7cm} +\| \textcolor{blue}{\Phi_n'} \circ {\phi}(x) \textcolor{blue}{\Phi_n'} (p)-xp_1\|\\
&< 3\ep/16+\|p {\phi}(x)-{\phi}(x)p\|+3\ep/16 \\
& <7\ep/16 \label{ML1-9end}
\end{align}
for all $x\in {\mathcal F}.$
Similarly,
\begin{align}\label{ML1-10}
{\rm dist}(p_1xp_1, D_1 )< {\ep / {2}} \tforal x\in {\mathcal F}.
\end{align}
By property (5) of \CA s in ${\cal C},$ there is \textcolor{blue}{a} \SCA\, $D_2\subset D_1$ with $1_{D_2}=1_{D_1}=p_1$ such that
$D_2\in {\cal C}$ and 
\beq\label{ML1-10+1}
{\rm dist}(p_1xp_1, D_2)<\ep\rforal x\in {\cal F}.
\eneq

Now, by \eqref{ML1-6+} and \eqref{ML1-6++},
there are projections $e_1\in \overline{\Phi_n'(w)^*\Phi_n'(w)(A\otimes B)\Phi_n'(w)^*\Phi_n'(w)}$
and $e_2\in \overline{\Phi_n'(w)\Phi_n'(w)^*(A\otimes B)\Phi_n'(w)\Phi_n'(w)^*}$ 
such that $e_1\sim e_2$ and 
\beq\label{ML1-10+2}
\|e_1-\Phi_n'(w)^*\Phi_n'(w)\|<\ep/8\andeqn \|e_2-\Phi_n'(w)\Phi_n'(w)^*\|<\ep/8.
\eneq
Moreover,
\beq\label{ML1-10+3}
\|(1-p_1)-e_1\|<\ep/8\andeqn \|e_2\Phi_n'\circ \phi(f_{1/4}(a))-e_2\|<\ep/4.
\eneq
It follows from \eqref{ML1-8} that
\beq\label{ML1-10+4}
\|e_2f_{1/4}(a)-e_2\|<\ep/2+3\ep/32.
\eneq
Thus
\beq\label{ML1-10+5}
\|f_{1/4}(a)e_2f_{1/4}(a)-e_2\|<  \ep + 6 \ep/32  <1/2.
\eneq
\textcolor{blue}{Then we can find a projection in $\mbox{Her}(f_{1/4}(a))$ which is unitarily equivalent to $e_2$.}  It follows that $e_2\lesssim f_{1/16}(a).$  Therefore, by \eqref{ML1-10+3}, 
\beq\label{ML1-10+6}
1-p_1\sim e_1\sim e_2\lesssim f_{1/16}(a)\lesssim a.
\eneq
%Also, by \eqref{ML1-2++} and \eqref{ML1-8},
%\begin{align}\label{ML1-11}
%1-p_1=f_{\ep/2}(1-p_1)\lesssim {\psi}({\phi}(a_1)).
%\end{align}
%In other words,
%\begin{align}\label{ML1-12}
%1-p_1\lesssim f_{\ep}( {\psi} \circ {\phi}(a)).
%\end{align}
%From \eqref{ML1-8} {and Proposition 2.2}
%and Lemma 2.3 (b)
%of \cite{Rordam2}, {and by \eqref{ML1-8}}, we have
%\begin{align}\label{ML1-13}
%f_{\ep}({\psi} \circ {\phi}(a))\lesssim a.
%\end{align}
%It follows that
%\begin{align}\label{ML1-14}
%1-p_1\lesssim a.
%\end{align}
From \eqref{ML1-9end}, \eqref{ML1-10}, and \eqref{ML1-10+6}, we conclude that $A\otimes B$ is in $TA{\cal C}.$ 
By Theorem \ref{T1}, for any unital simple infinite dimensional AF algebra $F$, $A \otimes F \in TA{\mathcal C}$.

\end{proof}

\begin{thm}\label{T2}

Let $A$ be a unital separable simple \CA. Suppose that
$A\otimes C$ is in $TA{\cal C}$  for some
unital amenable separable simple \CA\, {$C$} such that $TR(C)\le 1$  and $C$ satisfies the UCT.
Then {$A \otimes F$ is in $TA{\mathcal C}$ for any simple unital infinite dimensional AF algebra \textcolor{blue}{$F$}.}

\end{thm}

\begin{proof}
%Suppose that $TR(A\otimes C)\le 1$.
We may assume that $C$ has infinite dimension. Otherwise, as $C$ is simple,
$C \cong M_n(\mathbb{C})$ for some $n \in \N$. {As $M_n(A)$ is in $TA{\cal C}$, 
by applying Proposition \ref{Lher}, 
we conclude that $A$ is also in $TA{\cal C}.$ It follows from  Proposition \ref{L1} that  $A \otimes F$ is in $TA{\cal C}$ for any unital simple infinite dimensional AF algebra $F$.}

Now assume that  $C$ is {infinite} dimensional. { It follows from the assumption that  $A \otimes C$ is in $TA{\cal C}$ and  from Proposition \ref{L1} that 
{$(A \otimes C) \otimes Q$ is in $TA{\cal C}$}.}
Note that {$(A \otimes C) \otimes Q \cong A \otimes (C\otimes Q)$}.  Since  $TR(C) \leq 1$, it follows that $TR(C\otimes Q)\le 1.$
Since $C$ is amenable and satisfies {}{the} UCT, $C\otimes Q$ is also a unital separable amenable simple \CA\, which satisfies the UCT.  {It follows
from Theorem 10.4 of \cite{Lin-TR1} that $C \otimes Q$ is a unital simple AH-algebra with no dimension growth.}
One computes that $K_0(C\otimes Q)$ is torsion free.  \textcolor{blue}{Applying Lemma \ref{ML1}, we have that} $A \otimes F$ is in $TA{\cal C}$ for any unital simple infinite dimensional AF algebra $F$.
%{According to K\"unneth theorem, we have that
%$$ K_0(C \otimes Q) \cong K_0(C) \otimes K_0(Q) \bigoplus K_1(C) \otimes K_1(Q) \cong K_0(C) \otimes \Q . $$
%} Thus ${\rm Tor}(K_0(C \otimes Q)) = \{ 0 \}$.
%{The above is not true.} \textcolor{Maroon}{[[A safer approach is to write $C \otimes Q$ as the direct limit of $C \otimes M_{n_{k}}$, and derive the $K_0(C \otimes Q)$ from the direct limit. For that K\"unneth Theorem approach, shall I write the complete form of K\"unneth theorem containing the Tor(?, ?), then show that that part is $0$? Or should I say that $TR(C) \leq 1$ and $C$ satisfies UCT, thus $C$ is an AH-algebra, which satisfies the K\"unneth Theorem? ]]}
%{Note that $TR(C) \leq 1$ implies $TR(C \otimes Q) \leq 1$. As $C$ satisfies UCT, so does $C \otimes Q$. It follows that $C \otimes Q$ is a unital simple AH-algebra with no dimension growth. The above calculation of $K_0(C \otimes Q)$ indicates that there is no torsion in $K_0$.}
% It then follows from Lemma \ref{ML1}
%that $TR(A\otimes B)\le 1$ for some unital infinite dimensional simple AF-algebra $B.$ It then follows from
%{Theorem} \ref{T1} that {$A\in {\mathcal A}_1.$}
\end{proof}

{As \textcolor{blue}{a special case} to Theorem \ref{T2}, we have the following corollary.}

\begin{cor}\label{MC3}
Let $A$ be a unital separable simple \CA. 
Then $A\in {\cal A}_0$ if and only if $TR(A\otimes C)=0$ for some unital amenable separable simple \CA\,
$C$ with $TR(C)\le 1$ which satisfies the UCT.
%Suppose that
%$TR(A\otimes C)=0$  for some
%unital amenable separable simple \CA\, {}{$C$ such that} $TR(C)\le 1$ {}{and $C$} satisfies the UCT.
%Then $A\in {\mathcal A}_0.$
\end{cor}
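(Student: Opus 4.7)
The plan is to retrace the proof of Theorem \ref{T2}, but carrying the hypothesis $TR(A\otimes C)=0$ all the way through to obtain the strengthened conclusion $A\in\mathcal{A}_0$, rather than merely $A\in\mathcal{A}_1$.

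First I would dispose of the finite dimensional case. If $C$ is finite dimensional then, as in Theorem \ref{T2}, simplicity forces $C\cong M_n(\mathbb{C})$, and $TR(M_n(A))=0$ yields $TR(A)=0$ by Theorem 3.6 of \cite{Lin-SimpleNuCAOfTR1}. Since tensoring a unital simple \CA\ of tracial rank zero with any UHF algebra preserves tracial rank zero, we obtain $TR(A\otimes M_{\mathfrak{p}})=0$ for every supernatural $\mathfrak{p}$ of infinite type, hence $A\in\mathcal{A}_0$.

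Now assume $C$ is infinite dimensional. Tensoring with $Q$ gives $TR(A\otimes(C\otimes Q))=0$, and as in Theorem \ref{T2} the algebra $C\otimes Q$ is a unital simple AH-algebra with no dimension growth, satisfies the UCT, and has torsion-free $K_0$. I would then invoke a ``tracial rank zero'' version of Lemma \ref{ML1}: the proof of that lemma goes through verbatim with the class $\mathcal{I}_0$ of finite dimensional \CA s replacing $\mathcal{I}_1$, since neither the $KL$-lifting into the AF-model $B$ (obtained by Effros--Handelman--Shen from $K_0(C\otimes Q)$) nor the Elliott approximate intertwining step depends on the specific building block class used in the internal TR-decomposition. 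This produces a unital infinite dimensional simple AF-algebra $B$ with $TR(A\otimes B)=0$. Similarly, a ``tracial rank zero'' version of Theorem \ref{T1} applies: the same argument works with $\mathcal{I}_0$ in place of $\mathcal{I}_1$ (weak semi-projectivity is only simpler for finite dimensional \CA s), and yields $TR(A\otimes Q)=0$.

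To finish I would invoke the $\mathcal{A}_0$-analogue of the criterion recalled in the introduction from \cite{LN-range}, namely that $TR(A\otimes U)=0$ for some UHF-algebra $U$ of infinite type already implies $TR(A\otimes M_{\mathfrak{p}})=0$ for every supernatural $\mathfrak{p}$ of infinite type; this gives $A\in\mathcal{A}_0$. The only point requiring care is the claim that Lemma \ref{ML1} and Theorem \ref{T1} indeed transport cleanly from ``at most one'' to ``zero''; in both proofs the interval building blocks serve only to house the projection and the almost-approximating subalgebra produced by the TR-hypothesis, so replacing $\mathcal{I}_1$ by $\mathcal{I}_0$ throughout is a cosmetic change rather than a structural one. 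Assuming that is checked, the corollary follows as above.
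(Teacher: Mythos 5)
Your proposal is correct and is essentially what the paper intends: its proof of this corollary is literally the one sentence ``the proof is similar to that of Theorem \ref{T2},'' and your argument is exactly the expansion of that remark, replacing ${\cal I}_1$ by ${\cal I}_0$ throughout the proofs of Lemma \ref{ML1} and Theorem \ref{T1} and invoking the tracial-rank-zero analogues of the auxiliary facts (preservation of $TR=0$ under tensoring with $Q$, and the ``one UHF of infinite type suffices'' criterion from \cite{LN-range}). No further comment is needed.
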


%\begin{cor}\label{CC2}
%Let $A\in {\mathcal A}.$ Then  $TR(A\otimes B\otimes Q)\le 1$ for
%all unital simple AH-algebra $B.$
%\end{cor}

\begin{cor}\label{MC4}
Let $A$ be a unital separable simple \CA. 
Then $A\in {\cal A}_1$ if and only if $TR(A\otimes C)\le 1$ for some 
unital  simple $AH$-algebra $C.$
%Then $A \in {\mathcal A}_1$.

\end{cor}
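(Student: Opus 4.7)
The strategy is to upgrade $C$ to $C' := C \otimes Q$ so that $C'$ fits the hypotheses of Lemma \ref{ML1} (or equivalently Theorem \ref{T2}) applied to the pair $(A, C')$, and then deduce $A \in {\cal A}_1$.

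The finite-dimensional case is easy: if $C$ is finite-dimensional then, because $C$ is simple, $C \cong M_n(\C)$, hence $TR(M_n(A)) \le 1$, and Theorem 3.6 of \cite{Lin-SimpleNuCAOfTR1} gives $TR(A) \le 1$, so $A \in {\cal A}_1$. Assume henceforth that $C$ is infinite-dimensional. Three of the four hypotheses of Lemma \ref{ML1} for the pair $(A, C')$ are then immediate: $C'$ is a unital simple AH-algebra, being an inductive limit of the AH-algebras $C \otimes M_{k!}$; $K_0(C') = K_0(C) \otimes \Q$ is torsion-free by the K\"unneth formula; and because tensoring by a UHF-algebra preserves tracial rank at most one, we have $TR(A \otimes C') = TR((A \otimes C) \otimes Q) \le 1$.

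The main obstacle is the remaining hypothesis: $C'$ must have no dimension growth. The natural route is to start with any AH-decomposition $C = \lim_n \bigoplus_i M_{r_{n,i}}(C(X_{n,i}))$, tensor at stage $n$ with $M_{k_n!}$ for a sufficiently rapidly growing sequence $k_n$, and observe that the dimension ratios $\dim X_{n,i}/(k_n! \cdot r_{n,i})$ then tend to zero. This gives $C'$ slow dimension growth; an appeal to Gong's dimension-reduction theorem then upgrades ``slow'' to ``no'' dimension growth. A more indirect alternative is to use ${\cal Z}$-stability of $C'$ (since $Q$ absorbs ${\cal Z}$) together with classification results to obtain $TR(C') \le 1$, and then apply Theorem 10.4 of \cite{Lin-SimpleNuCAOfTR1} to conclude that $C'$ is AH with no dimension growth. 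Either way, once this regularity is in place, Lemma \ref{ML1} applied to $A$ and $C'$ yields $A \in {\cal A}_1$.
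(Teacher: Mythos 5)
Your proof is correct and follows essentially the same route as the paper: both pass to $C' = C\otimes Q$, establish that it is a unital simple AH-algebra with no dimension growth and torsion-free $K_0$, observe that $TR(A\otimes C')\le 1$, and conclude via Lemma \ref{ML1}. The only difference is one of detail rather than strategy: the paper obtains the no-dimension-growth statement by citing Theorem 10.4 of \cite{Lin-SimpleNuCAOfTR1}, whereas you supply the underlying slow-dimension-growth computation (and the finite-dimensional case) explicitly.
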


\begin{proof}
For the ``only if "part, we only need to choose $C$ to be $Q.$ The corollary then follows from Corollary \ref{CN1}.

For the ``if" part, note that  by Theorem 10.4 of \cite{Lin-TR1} {,} $C\otimes Q$ is a unital simple AH-algebra with no dimension growth. {Since $TR(A \otimes C) \leq 1$,} {}{we have} $TR(A\otimes C\otimes Q)\le 1.$ Theorem \ref{T2} then applies.
%{Besides, from the same argument as in Theorem \ref{T2}, $K_0(C \otimes Q)$ is torsion-free}.
%{As $TR(A \otimes C) \leq 1$, we have $TR(A \otimes (C \otimes Q)) \leq 1$. Note that $\mbox{Tor}(K_0(C \otimes Q)) = \{ 0 \}$. According to Lemma \ref{ML1}, we have that
%$A \in \mathcal{A}_1$}.
\end{proof}

%
%
% Block Comment.
%
%
%\begin{comment}

%\begin{prop}\label{P2}
%
%
%Let $A$ be a unital separable simple \CA. Then the following are {equivalent}:
%
%1) {$A\in {\mathcal A}_1$}{}{;}
%
%2) $A \otimes U \in {{\mathcal A}_1}$ for some UHF-algebra $U$ of infinite type{}{;}
%
%3) $A \otimes U \in {{\mathcal A}_1}$ for any UHF-algebra $U$ of infinite type.
%
%\end{prop}
%
%\begin{proof}
%
%\vspace{2mm}
%``$3)\Rightarrow 2)$'': {}{It is} obvious.
%
%``$2) \Rightarrow 1)$":
%Suppose that $A \otimes U \in {\mathcal A}_1$ for some UHF-algebra $U$ of infinite type.
%Then $TR(A\otimes U\otimes U)\le 1.$ {}{As} $(A \otimes U) \otimes U \cong A\otimes U$, {}{
%we} conclude that $TR(A\otimes U)\le 1.$ This implies 1){}{.}
%
%``$1) \Rightarrow 3)$":
%Since $A\in {\mathcal A}_1,$ $TR(A\otimes U)\le 1$ for any UHF-algebra $U$ of infinite type. In particular,
%$A\otimes U\in {\mathcal A}_1.$

%\end{proof}

%\end{comment}

%
%
% End of Block Comment.
%
%

\section{Tensor Products}
\setcounter{equation}{0}

In this section we are ready to answer the following three questions:

(1) Let $A$ and $B$ be both in ${\cal A}_1\cap {\cal N}.$ Is $A\otimes B$ in  ${\cal A}_1\cap {\cal N}?$
 
 (2) Let $A$ be a unital separable simple \CA\, and let $B\in {\cal A}_1\cap {\cal N}.$ 
 Suppose that $A\otimes B\in {\cal A}_1.$ Is it true that $A\in {\cal A}_1?$ 
 
(3) Let $A\in {\cal A}_1\cap {\cal N}$ and $B\in {\cal N}$ with $TR(B)\le 1.$ 
Is it true $TR(A\otimes B)\le 1?$

\begin{prop}\label{T5}
Let $A$ {and} $B$ be two unital separable simple \CA s in {${\mathcal A}_1 \cap {\mathcal N}.$}
Then {$A\otimes B \in {{\mathcal A}_1 \cap {\mathcal N}}.$}

\end{prop}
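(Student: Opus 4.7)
The plan is to verify each of the defining properties of ${\cal C}_1$ for $A\otimes B$: unital, separable, simple, amenable, satisfies the UCT, and $TR((A\otimes B)\otimes M_{\mathfrak p})\le 1$ for every supernatural number $\mathfrak p$ of infinite type. Everything except the last condition is routine from standard tensor product properties (using nuclearity of one factor for simplicity) and the closure of the bootstrap class under minimal tensor products. The essential content is the tracial rank estimate.

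For fixed $\mathfrak p$ of infinite type, my first move would be to exploit $M_{\mathfrak p}\otimes M_{\mathfrak p}\cong M_{\mathfrak p}$ (valid because $\mathfrak p$ is of infinite type) to rewrite
\[
(A\otimes B)\otimes M_{\mathfrak p}\cong (A\otimes M_{\mathfrak p})\otimes(B\otimes M_{\mathfrak p})=A'\otimes B'.
\]
By the hypothesis $A,B\in{\cal C}_1$, both $A'$ and $B'$ are unital separable simple amenable \CA s in ${\cal N}$ with tracial rank at most one. Theorem 10.4 of \cite{Lin-SimpleNuCAOfTR1} then identifies $A'$ and $B'$ with unital simple AH-algebras of no dimension growth, allowing me to work at the AH level.

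The next step is to show that $A'\otimes B'$ is itself a unital simple AH-algebra with no dimension growth. Given inductive limit presentations of $A'$ and $B'$ by building blocks of the form $pM_k(C(X))p$ with $\dim X$ uniformly bounded in each, taking tensor products level by level yields building blocks $(p\otimes q)M_{kl}(C(X\times Y))(p\otimes q)$ with $\dim(X\times Y)\le\dim X+\dim Y$, so the dimension bound persists; simplicity of the limit is preserved because both factors are simple and nuclear. Applying Theorem 10.4 of \cite{Lin-SimpleNuCAOfTR1} in the opposite direction then delivers $TR(A'\otimes B')\le 1$, hence $TR((A\otimes B)\otimes M_{\mathfrak p})\le 1$, completing the verification that $A\otimes B\in{\cal C}_1$.

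The main obstacle I expect is the structural bookkeeping in the third paragraph: ensuring that compatible inductive-limit presentations of $A'$ and $B'$ can be chosen and that simplicity of the \emph{limit} (rather than of the individual building blocks) is genuinely preserved under $\otimes$. Beyond this, the argument is entirely structural---two applications of Theorem 10.4 together with the identity $M_{\mathfrak p}\otimes M_{\mathfrak p}\cong M_{\mathfrak p}$---and requires no new tracial-rank estimate of the kind carried out in the proofs of Theorem \ref{T1} or Lemma \ref{ML1}.
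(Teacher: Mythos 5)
Your proposal is correct and follows essentially the same route as the paper: rewrite $(A\otimes B)\otimes M_{\mathfrak p}$ as $(A\otimes M_{\mathfrak p})\otimes(B\otimes M_{\mathfrak p})$ using $M_{\mathfrak p}\otimes M_{\mathfrak p}\cong M_{\mathfrak p}$, identify both factors as unital simple AH-algebras with no dimension growth via the classification theorem, and observe that their tensor product is again such an algebra. The only cosmetic difference is that the paper runs the argument just for $Q$ (relying on the fact, cited from \cite{LN-range}, that one UHF-algebra of infinite type suffices), whereas you treat every $\mathfrak p$ directly; the step you flag as the main obstacle is exactly the one the paper dismisses as easy.
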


\begin{proof}
Let $A, B \in {{\mathcal A}_1 \cap {\mathcal N}}$. Then
$$(A\otimes B)\otimes Q \, {\cong} \, (A\otimes B) \otimes (Q\otimes Q) \, {\cong} \, (A\otimes Q) \otimes (B\otimes Q).$$

Since both $A$ and $B$ are in  {${\mathcal A}_1 \cap {\mathcal N}$}, $A\otimes Q$ and $B\otimes Q$ {have tracial rank no more than one and} satisfy the UCT.
Therefore,  by Lemma 10.9 and Theorem 10.10 of \cite{Lin-TR1}, each of them is isomorphic to some unital simple AH-{{algebra}} with no dimension growth.
It is then easy to see that $(A\otimes Q)\otimes (B\otimes Q)$ can be written as a unital simple AH-algebra
with no dimension growth, {which implies that $TR(A \otimes B \otimes Q) \leq 1$.}

\end{proof}

\begin{thm}\label{T6}
Let $A$ be  a unital separable simple \CA.  Suppose that there exists a unital separable simple  \CA\, $B\in  {{\mathcal A}_1 \cap {\mathcal N}}$ such that
$A\otimes B\in {\mathcal A}_1,$ then $A \in {{\mathcal A}_1}$.

\end{thm}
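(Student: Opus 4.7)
The plan is to reduce Theorem~\ref{T6} to Corollary~\ref{MC4} by exhibiting a unital simple AH-algebra $C$ with $TR(A\otimes C)\le 1$, from which Corollary~\ref{MC4} immediately forces $A\in {\cal A}_1$.

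The natural choice is $C:=B\otimes Q$, where $Q$ is the UHF-algebra with $K_0(Q)=\Q$. First I would verify that $C$ is a unital simple AH-algebra with no dimension growth. Since $B\in {\cal C}_1$, the algebra $B\otimes Q$ is unital, separable, simple, amenable, satisfies the UCT, and has $TR(B\otimes Q)\le 1$ (the last by the definition of ${\cal C}_1$, using that $Q=M_{\mathfrak{p}}$ for a suitable supernatural number of infinite type). Applying Theorem~10.4 of \cite{Lin-SimpleNuCAOfTR1} to $B\otimes Q$ identifies it as a unital simple AH-algebra with no dimension growth. In particular, $C=B\otimes Q$ is a unital simple AH-algebra.

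Next I would check that $TR(A\otimes C)\le 1$. Using the associativity of the tensor product,
\begin{equation*}
A\otimes C \;=\; A\otimes(B\otimes Q) \;\cong\; (A\otimes B)\otimes Q.
\end{equation*}
Because $A\otimes B\in {\cal A}_1$, the definition of ${\cal A}_1$ gives $TR((A\otimes B)\otimes Q)\le 1$, so $TR(A\otimes C)\le 1$. At this point all hypotheses of Corollary~\ref{MC4} are satisfied, and it yields $A\in {\cal A}_1$.

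The argument is essentially a bookkeeping combination of the classification input (Theorem~10.4 of \cite{Lin-SimpleNuCAOfTR1}, used to turn the abstract algebra $B\otimes Q$ into an honest AH-algebra) with Corollary~\ref{MC4}. There is no real obstacle; the one place to be careful is to make sure the ``for all supernatural numbers $\mathfrak p$'' condition in the definition of ${\cal C}_1$ is applied to a specific infinite-type supernatural number corresponding to a UHF-algebra absorbed by itself, so that $Q\otimes Q\cong Q$ does not enter the argument in any essential way.
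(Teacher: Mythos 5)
Your proposal is correct and follows essentially the same route as the paper: identify $B\otimes Q$ as a unital simple AH-algebra with no dimension growth via classification, note $TR(A\otimes B\otimes Q)\le 1$ from $A\otimes B\in{\cal A}_1$, and then invoke the tensor-with-an-AH-algebra criterion. The only cosmetic difference is that you route through Corollary~\ref{MC4} while the paper applies Lemma~\ref{ML1} directly (using that $K_0(B\otimes Q)$ is torsion free), and Corollary~\ref{MC4} is itself just a wrapper around that lemma.
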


\begin{proof}
{
Since $A\otimes B\in {\mathcal A}_1$, {}{we have that} $TR(A\otimes B\otimes Q)\le 1.$
As $B\in {\mathcal A}_1 {\cap {\mathcal N}}$, we have that $B \otimes Q$ satisfies {}{the} UCT and $TR(B \otimes Q) \leq 1$. By Lemma 10.9 {and} Theorem 10.10 of \cite{Lin-TR1}{,} $B\otimes Q$ is a unital simple AH-algebra with no dimension growth.
Note that ${\mbox{Tor}}(K_0( B \otimes Q )) = 0$.
It follows from Lemma \ref{ML1} (by setting $TA{\cal C}$ to $TAI$ algebras) that  $A\in {\mathcal A}_1.$}

\end{proof}

We now consider the converse of a special case of Theorem \ref{T2} (when $TA{\cal C}$ are just $TAI$ algebras) in the following sense.
Let $A\in {\mathcal A}_1 {\cap {\mathcal N}}$. Is it true that ${TR}(A\otimes C)\le 1$ if $C$ is a unital separable
infinite dimensional simple \CA\, with $TR(C)\le 1$? {}{An affirmative answer is given in Theorem \ref{FT}}.

\begin{lem}\label{RriesztoR}
Let $G$ be a countable weakly unperforated  simple ordered group which is rationally
Riesz.  Suppose that $G$ also has the following property:
for any $x,\, y\in G$ with $x<y$ and for any integer $N \ge 1$,
there exists $z\in G$ such that
\beq\label{RtoR-1}
x< Nz <  y.
\eneq
Then $G$ has the Riesz interpolation property.
\end{lem}

\begin{proof}
Let $u\in G_+$ be an order unit. Denote by $S_u(G)$ the state space of $G$, i.e.,  the set of order and unit preserving homomorphisms from $G$ to the additive group $\R$.
First, we claim the following:
For any $a_1, a_2\in G_+\setminus \{0\},$ there is $b\in G_+\setminus\{0\}$ such that
\beq\label{RtoR-1+}
0<b<a_i,\,\,\,i=1,2.
\eneq
In fact, as $G$ is simple, there exists an integer $n_1>0$ such that
\beq\label{RtoR-1+2}
n_1a_i>u, \  i=1,2.
\eneq
By the assumption, there exists $b_0\in G$ such that
\beq\label{RtoR-1+3}
0<n_1b_0<u.
\eneq
As $G$ is weakly unperforated, we get
\beq\label{RtoR-1+4}
0<b_0<a_i,\,\,\,i=1,2,
\eneq
which proves the claim.

Suppose that $x_i\le y_j$ for $i, j=1,2.$
We will show that there exists $z\in G$ such that
\beq\label{RtRn}
x_i\le z\le y_j,\,\,\, i,j=1,2.
\eneq

If $x_{i'}=y_{j'}$ for some
pair of $i'$ and $j',$ choose $z=y_{j'}.$ Then
$x_i\le y_{j'}=z=x_{i'}\le y_i,$ $i=1,2.$
Now assume that $x_i<y_j$ for all $i$ and $j.$

Since $G$ is rationally Riesz, there are $m, n\in \N\setminus \{0\}$  and
$w\in G$ such that

\beq\label{RtoR-2}
nw \le my_j\andeqn mx_i\le nw, \,\,\, i,j=1,2.
\eneq

If $nw=mx_{i'}=my_{j'}$ for certain $i'$ and $j',$
then $m(y_{j'}-x_{i'})=0.$  Since $y_{j'}-x_{i'}>0$ and  $G$ is an ordered group, this is impossible.

If $nw<my_j$ for all $j,$  by the claim above,
there exists $b_0\in G_+$ such that
\beq\label{RtoR-2+}
0<b_0<my_j-nw,\,\,\, j=1,2.
\eneq

By the assumption, there exists
$z\in G$ such that
\beq\label{RtoR-3}
mx_i\le nw<mz<nw+b_0<my_j,\,\,\, j=1,2.
\eneq
By the weak unperforation,
\beq\label{RtoR-4}
x_i<z<y_j,\,\,\, i,j=1,2.
\eneq

If $nw>mx_i,$ $i=1,2,$ by the claim, there exits $b_0\in G_+$ such that
\beq\label{RtoR-5}
0<b_0<nw-mx_i,\,\,\, i=1,2.
\eneq
Then, as above,  we obtain $z\in G$ such that
\beq    \label{RtoR-6}
mx_i< nw+b_0<mz<nw\le my_j,\,\,\,i.j=1,2.
\eneq
We then conclude, as above,
\beq\label{RtoR-7}
x_i<z < y_j,\,\,\, i,j=1,2.
\eneq

Thus $G$ has the Riesz interpolation property.

\end{proof}

\begin{lem}\label{ABRiesz}
Let $A\in {{\mathcal A}_1} {\cap {\mathcal N}}$. Suppose that $B$ is a unital separable amenable simple \CA\, with $TR(B)\le 1$ which satisfies the UCT.
Then $K_0(A\otimes B)$ has the {Riesz} interpolation property.
\end{lem}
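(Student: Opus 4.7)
The plan is to promote the Riesz interpolation property from $K_0(A\otimes B\otimes Q)$ down to $K_0(A\otimes B)$, using tracial approximate divisibility of $A\otimes B$.

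First I would observe that $A\otimes B\in\mathcal{C}_1$. Since $B$ is unital separable amenable simple with $TR(B)\le 1$ and satisfies the UCT, $B\in\mathcal{C}_1$; by Proposition \ref{T5}, $A\otimes B\in\mathcal{C}_1$. In particular $D:=A\otimes B\otimes Q$ is a unital separable amenable simple \CA\, satisfying the UCT with $TR(D)\le 1$, so by Theorem 10.4 of \cite{Lin-SimpleNuCAOfTR1}, $D$ is a unital simple AH-algebra with no dimension growth. Applying the UCT K\"unneth formula to $(A\otimes B)\otimes Q$ together with the flatness of $\mathbb{Q}$ over $\mathbb{Z}$, $K_0(D)\cong K_0(A\otimes B)\otimes_{\mathbb{Z}}\mathbb{Q}$, which is torsion-free. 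Lemma 8.1 of \cite{G-K0OfMultAlgOfRR0CA} then implies $K_0(D)$ is an unperforated Riesz group.

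The main step is the descent of Riesz interpolation to $K_0(A\otimes B)$. Given $x_1,x_2,y_1,y_2\in K_0(A\otimes B)$ with $x_i\le y_j$, Riesz interpolation in $K_0(D)$ produces $h\in K_0(D)$ with $x_i\le h\le y_j$. Since $K_0(D)=K_0(A\otimes B)\otimes\mathbb{Q}$, write $h=z/n$ for some $z\in K_0(A\otimes B)$ and integer $n\ge 1$, so that $nx_i\le z\le ny_j$ in $K_0(D)$. The task is to replace $z$ by an interpolator inside $K_0(A\otimes B)$ itself.

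The technical tool is tracial approximate divisibility: since $TR(B)\le 1$, Theorem 5.4 of \cite{Lin-TR1} implies $B$ is tracially approximately divisible, a property inherited by $A\otimes B$. Lemma \ref{1Dvi} then allows any projection in $A\otimes B$ to be split into $n$ mutually equivalent projections plus an arbitrarily small remainder in the Cuntz semigroup. Combined with strict comparison for projections in $A\otimes B$ (from $\mathcal{Z}$-stability of $B$) and the fact that traces of $A\otimes B$ extend to $D$ and detect order on torsion-free quotients, one can realize an approximate version of $z/n$ inside $K_0(A\otimes B)$ and absorb the division error into the slack $y_j-x_i$, which is strictly positive in trace by simplicity of $A\otimes B$. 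The principal obstacle is organizing the estimates so that the remainder is simultaneously small compared to all four slacks $y_j-x_i$ and so that the constructed element interpolates in the order of $K_0(A\otimes B)$, not merely in $K_0(D)$.
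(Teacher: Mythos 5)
Your proposal is correct and follows essentially the same route as the paper's proof: Proposition \ref{T5} to place $A\otimes B$ in ${\cal C}_1$, rational Riesz interpolation (which the paper quotes directly from \cite{LN-range} rather than re-deriving it from interpolation in $K_0(A\otimes B\otimes Q)$), tracial approximate divisibility in the refined form of Lemma \ref{ABdiv} (the version of Lemma \ref{1Dvi} with a controlled remainder) to divide the interpolator by the denominator inside $K_0(A\otimes B)_+$, and strict comparison/weak unperforation via ${\cal Z}$-stability (Corollary 4.10 of \cite{Rordam-ZAbsCA}) to convert strict state inequalities into order. The one assertion you should not lean on is that the slack $y_j-x_i$ is strictly positive on traces ``by simplicity'': it can be zero or torsion (e.g.\ when $mx_1=nz=my_1$), and the paper handles this degenerate case separately by a short weak-unperforation argument showing $x_1=y_1$, after which interpolation is trivial; once that case is excluded, compactness of the state space supplies the uniform gap $d>0$ needed to absorb the division error simultaneously against all four endpoints.
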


\begin{proof}

Since $A \in {\mathcal A}_1 {\cap {\mathcal N}}$ and $TR(B) \le 1$,  by {}{Proposition} \ref{T5}, $A \otimes B \in {\mathcal A}_1 {\cap {\mathcal N}}$.
It follows from \cite{LN-range} that $K_0(A \otimes B)$ is rationally {Riesz}.

By Lemma 10.9 and      Theorem 10.10 of \cite{Lin-TR1},
 %and \cite{Villadsen1},
 $B$ is isomorphic to a unital simple AH-algebra with no dimension growth. It follows from Theorem 2.1 of \cite{EGL2} that
$B$ is approximately divisible. Therefore $A\otimes B$ is approximately divisible. It follows that, for any pair $x, y\in K_0(A\otimes B)$ and any
integer $N\ge 1$ with $x<y,$ there exists $z\in K_0(A\otimes B)$ such that
\beq\label{ABR-1}
x<Nz<y.
\eneq
Moreover, \textcolor{blue}{from the approximate divisibility}, by Theorem 1.4 of \cite{BKR}, $A \otimes B$ has \textcolor{blue}{the} strict comparison for positive elements. In particular, it follows that $K_0(A\otimes B)$ is weakly unperforated.
The lemma then follows by applying Lemma \ref{RriesztoR}.

\end{proof}

\begin{thm}\label{ABtr1}
Let $A\in {\mathcal A}_1 {\cap {\mathcal N}}$. Then, for any unital infinite dimensional simple AH-algebra $B$ with slow dimension growth,
$A\otimes B$ is a unital simple AH-algebra with no dimension growth.
\end{thm}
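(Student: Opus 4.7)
First I would verify that $B$ itself lies in ${\cal C}_1$ so that Proposition \ref{T5} becomes applicable. For any UHF-algebra $M_{\mathfrak p}$ of infinite type, tensoring $B$ with $M_{\mathfrak p}$ absorbs arbitrarily large matrix blocks into the inductive-limit presentation of $B$, so $B\otimes M_{\mathfrak p}$ is a unital simple AH-algebra with no dimension growth; by Theorem 10.4 of \cite{Lin-SimpleNuCAOfTR1} this forces $TR(B\otimes M_{\mathfrak p})\le 1$. Since $B$ is amenable and satisfies the UCT, $B\in {\cal C}_1$. Proposition \ref{T5} then yields $A\otimes B\in {\cal C}_1$, so $A\otimes B$ is unital, separable, amenable, simple, ${\cal Z}$-stable (inherited from $B$, which is ${\cal Z}$-stable by slow dimension growth), satisfies the UCT, and has $TR((A\otimes B)\otimes Q)\le 1$.

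Next, by Lemma \ref{ABRiesz}, $K_0(A\otimes B)$ has the Riesz interpolation property, and since $A\otimes B$ is ${\cal Z}$-stable, $K_0(A\otimes B)$ is also weakly unperforated. The Elliott invariant of $A\otimes B$ therefore consists of an unperforated Riesz (i.e.\ dimension) group with order unit, paired with a metrizable Choquet simplex, together with a countable abelian $K_1$-group. Invoking the range theorem of \cite{LN-range} combined with the Gong--Elliott range computations for AH-algebras with no dimension growth, there exists a unital simple AH-algebra $D$ with no dimension growth such that $\mathrm{Ell}(D)\cong \mathrm{Ell}(A\otimes B)$.

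Finally, both $A\otimes B$ and $D$ are unital separable amenable simple ${\cal Z}$-stable $C^*$-algebras in ${\cal A}_1$ satisfying the UCT, so the classification theorem of \cite{Lin-AsymUnitEquiv} lifts the Elliott-invariant isomorphism to an actual isomorphism $A\otimes B\cong D$. Hence $A\otimes B$ is a unital simple AH-algebra with no dimension growth, as claimed. The main obstacle will be the range-of-invariants step: one must check that the full Elliott data of $A\otimes B$, including the order unit, the pairing between $K_0$ and $T(A\otimes B)$, and the $K_1$-group, is realizable by an AH-algebra with no dimension growth, and it is precisely the Riesz interpolation property furnished by Lemma \ref{ABRiesz} that allows the Gong--Elliott range theorem to apply.
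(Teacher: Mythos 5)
Your argument is essentially the paper's: Proposition \ref{T5} gives $A\otimes B\in{\cal C}_1$, Lemma \ref{ABRiesz} gives the Riesz interpolation property for $K_0(A\otimes B)$, a range-of-invariant theorem produces a unital simple AH-algebra with no dimension growth realizing the same Elliott invariant, and classification upgrades this to an isomorphism. Your choice of the ${\cal Z}$-stable classification of \cite{Lin-AsymUnitEquiv} (after checking ${\cal Z}$-stability) is in fact the logically cleaner citation at the last step, since $TR(A\otimes B)\le 1$ is part of what is being proved rather than a hypothesis one may assume.
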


\begin{proof}
Since $A\in {\mathcal A}_1 {\cap {\mathcal N}}$, it follows {from} {}{Proposition} \ref{T5} that $A\otimes B\in {\mathcal A}_1 {\cap {\mathcal N}}$. By {Lemma} \ref{ABRiesz},
$K_0(A\otimes B)$ has the Riesz interpolation property. Since $B$ is {an} infinite dimensional simple AH-algebra of no dimension growth,  \textcolor{blue}{from Theorem 2.1 of \cite{EGL2}}, $B$ is approximately divisible. So
  $A\otimes B$ is approximately divisible. It follows that
$K_0(A\otimes B)\not = \Z$.
Since $A\otimes B\otimes Q$ is a unital simple AH-algebra
of no dimension growth, the  canonical map $r_{A\otimes B\otimes Q}\colon T(A\otimes B\otimes Q)\rightarrow S_{[1]}(K_0(A \otimes B\otimes Q))$ maps extreme points to extreme points. Therefore the
 canonical map
$r_{A\otimes B} \colon T(A \otimes B) \rightarrow S_{[1]}(K_0(A \otimes B))$
maps the {extreme} points to 
{{extreme}} points (see lemma 5.6 of \cite{LN-range}). 
%such that for all projections $p, q \in M_n(A \otimes B)$, $r(\tau)([p] - [q]) = \tau'(p) - \tau'(q) $,
%where $\tau'$ is the un-normalized traced on $M_n(A)$ derived from $\tau$. In other words, $\tau' = \tau \otimes Tr_n$, where
%$Tr_n$ is the trace of $n$ by $n$ matrix.
% {As $Q$ has unique tracial state, $T(A \otimes B)$ is affine homeomorphic to $T(A \otimes B \otimes Q)$. As $A \otimes B \in {\mathcal C}_1$, $A \otimes B \otimes Q$ is an AH-algebra with no dimension growth according to \cite{Lin-TR1} and \cite{Villadsen1}.} Then $r$ maps the extremal points of $T(A\otimes B\otimes Q)$ into extremal points of {$S_u(K_0(A\otimes B))$} according to
%Theorem 1.11 of \cite{Villadsen2}.
It follows from \cite{Villadsen1} that there is a unital simple AH-algebra {$C$} with no dimension growth
such that {}{its} Elliott invariant is exactly the same as that of $A\otimes B.$
According to Theorem 10.4 of \cite{Lin-TR1}, we have that $A \otimes B \cong C.$

\end{proof}

{We end {}{this} note by the following summarization:}

\begin{thm}\label{FT}

Let $A\in {\mathcal N}$ be a unital separable simple amenable \CA \ {that {satisfies} the UCT}.
Then the following are equivalent.

(1) $A\in {\mathcal A}_1$;

(2) $TR(A\otimes Q)\le 1;$

(3) $A\otimes Q\in {\mathcal A}_1;$

(4) $TR(A\otimes B)\le 1$ for some unital infinite dimensional simple AF-algebra $B;$

(5) $TR(A\otimes B)\le 1$ for all unital simple {infinite dimensional} AF-algebras $B;$

(6) $A\otimes B\in {\mathcal A}_1$ for some unital simple infinite dimensional AF-algebra $B;$

(7) $A\otimes B\in {\mathcal A}_1$ for all unital simple {infinite dimensional} AF-algebras $B;$

(8) $TR(A\otimes B)\le 1$ for some unital infinite dimensional simple AH-algebra $B$ with no dimension growth;

(9) $TR(A\otimes B)\le 1$ for all unital simple {infinite dimensional} AH-algebras $B$ with no dimension growth;

(10) $A\otimes B\in {\mathcal A}_1$ for some unital simple {infinite dimensional} AH-algebra $B$ with no dimension growth;

(11) $A\otimes B\in {\mathcal A}_1$ for all unital simple {infinite dimensional} AH-algebras $B$ {with no dimension growth};

(12) $A\otimes B\in {\mathcal A}_1$ for {some} unital simple {infinite dimensional} \CA\, $B$ {}{in} ${\mathcal A}_1 {\cap {\mathcal N}}$;

(13) $A\otimes B\in {\mathcal A}_1$ for {all} unital simple {infinite dimensional} \CA {s}\, $B$ in ${\mathcal A}_1 {\cap {\mathcal N}}$.

\end{thm}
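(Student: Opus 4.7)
The plan is to establish every one of (2)--(13) as equivalent to (1) by stringing together results already in the note. I would organize the argument as follows: first show (1), (2), (3) are equivalent by purely formal manipulations with $Q$; then show (1) implies each of the ``for all'' conditions (5), (7), (9), (11), (12); note that each ``for all'' statement trivially implies its corresponding ``for some'' weakening (4), (6), (8), (10), (13); and finally run each of these ``for some'' conditions back to (1).

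For (1) $\Leftrightarrow$ (2) $\Leftrightarrow$ (3), since $Q$ is a UHF-algebra of infinite type, (1) immediately yields (2), while (2) $\Rightarrow$ (1) is Theorem \ref{T2} applied with $C=Q$ (using that $A\in{\cal N}$ and $TR(Q)=0$). Condition (2) $\Leftrightarrow$ (3) then follows because for any supernatural number $\mathfrak{p}$ of infinite type, $(A\otimes Q)\otimes M_{\mathfrak{p}}\cong A\otimes M_{\mathfrak{q}}$ for another supernatural number $\mathfrak{q}$ of infinite type, and $Q\otimes Q\cong Q$.

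For the ``for all'' conditions, the key input is Proposition \ref{T5}, which gives $A\otimes B\in{\cal C}_1\subset{\cal A}_1$ whenever $A,B\in{\cal C}_1$. Since infinite dimensional simple AF-algebras and infinite dimensional simple AH-algebras with no dimension growth both lie in ${\cal C}_1$, this already handles (7), (11) and (12). For the stronger statements (5) and (9), I would invoke Theorem \ref{ABtr1}, which identifies $A\otimes B$ as a unital simple AH-algebra with no dimension growth, and hence as having tracial rank at most one. All ``for some'' versions (4), (6), (8), (10), (13) then follow from the corresponding ``for all'' versions.

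Running each ``for some'' condition back to (1): (4) $\Rightarrow$ (1) is Theorem \ref{T1}, and (8) $\Rightarrow$ (1) is Corollary \ref{MC4}. For (6), I would tensor $A\otimes B\in{\cal A}_1$ with $Q$ to get $TR(A\otimes(B\otimes Q))\le 1$; since $B\otimes Q$ remains a unital infinite dimensional simple AF-algebra, Theorem \ref{T1} finishes the job. Condition (10) is handled analogously, using Corollary \ref{MC4} and the fact that $B\otimes Q$ stays a unital simple AH-algebra with no dimension growth (Theorem 10.4 of \cite{Lin-SimpleNuCAOfTR1}). Finally (13) $\Rightarrow$ (1) is exactly Theorem \ref{T6}. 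In each case the hypothesis $A\in{\cal N}$ upgrades $A\in{\cal A}_1$ to $A\in{\cal C}_1$. There is no substantive obstacle left here: the heavy lifting was already performed in Theorems \ref{T1}, \ref{T2}, \ref{T5}, \ref{T6} and \ref{ABtr1}, so the only point to watch is that tensoring by $Q$ preserves the relevant class memberships (AF, AH with no dimension growth, ${\cal C}_1$), which is either immediate or given by the cited classification theorem.
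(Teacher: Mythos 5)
Your proposal is correct and follows essentially the same route as the paper: the equivalences are reduced to the trivial implications plus Theorem \ref{T1}, Theorem \ref{T2}/Corollary \ref{MC4}, Proposition \ref{T5}, Theorem \ref{T6} and Theorem \ref{ABtr1}, together with the observation that tensoring with $Q$ preserves the classes involved. The only (harmless) differences are cosmetic choices of citation, e.g.\ using Theorem \ref{T2} for (2) $\Rightarrow$ (1) where the paper invokes Proposition \ref{P2} for (3) $\Rightarrow$ (1), and deriving (12) directly from Proposition \ref{T5} rather than via (9).
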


\begin{proof}

 %In the general assumption of ${\mathcal N}$, I further require UCT. In other words, I assume that $A \in {\mathcal N}$ be a unital separable simple amenable \CA\, that satisfies UCT.
Note that  ``(1) $\Rightarrow$ (2)", ``(2) $\Rightarrow$ (3)",
``(5) $\Rightarrow$ (4)'',  ``(4) $\Rightarrow$ (6)", ``(7) $\Rightarrow$ (6)'',  ``(9) $\Rightarrow$ (8)'', ``(9) $\Rightarrow$ (10)'',  ``(11) $\Rightarrow$ (10)", ``(11) $\Rightarrow$ (7)'',
``({13}) $\Rightarrow$ (11)", ``({13}) $\Rightarrow$ (7)'' and ``({13}) $\Rightarrow$ ({12})'' are {}{straightforward}.

\vspace{2mm}

{}{Note} that ``(1) $\Rightarrow$  (5)"  and ``(1) $\Rightarrow$ (9)" follow from {}{Theorem} \ref{ABtr1}.
To see that ``(1) $\Rightarrow$ ({13})"{}{,}
let $A \in {{\mathcal A}_1 \cap {\mathcal N}}$ and $B\in {{\mathcal A}_1 \cap {\mathcal N}}$. Then
$TR(B\otimes Q)\le 1.$ So $B\otimes Q$ is a unital simple infinite dimensional AH-algebra with no dimension growth.
Since ``(1) $\Rightarrow$ (9)", this implies that $TR(A\otimes (B\otimes Q))\le 1.$
It {}{then} follows that $A\otimes B\in {\mathcal A}_1.$

 For ``({12}) $\Rightarrow$ (1)", {}{assume that} $TR(A\otimes B\otimes Q)\le 1.$ It follows
 that $TR(A\otimes (B\otimes Q))\le 1.$ Since $TR(B\otimes Q)\le 1,$ again, $B\otimes Q$ is a unital simple
 infinite dimensional AH-algebra with no dimension growth. It follows from Corollary \ref{MC4} that {$A\in {\mathcal A}_1$}.

That ``(3) $\Rightarrow$ (1)" follows from {\cite{LN-range}} and ``(4) $\Rightarrow$ (1){"} follows from Corollary \ref{CN1}.

For ``(6) $\Rightarrow$ (4)", one considers $A\otimes B\otimes Q$ and notes {}{that} $B\otimes Q$ is a unital simple
infinite dimensional AF-algebra.

That ``(8) $\Rightarrow$  (4)"  follows from {}{Corollary} \ref{MC4}.

The rest of implications follow similarly as established {}{above}.

\end{proof}

\vspace{1cm}

e-mail address: hlin@uoregon.edu, \,\,\,wsun@math.ecnu.edu.cn

\end{document}